\DeclareRobustCommand\bigop[1]{%
  \mathop{\vphantom{\sum}\mathpalette\bigop@{#1}}\slimits@
}
\newcommand{\bigop@}[2]{%
  \vcenter{%
    \sbox\z@{$#1\sum$}%
    \hbox{\resizebox{\ifx#1\displaystyle.9\fi\dimexpr\ht\z@+\dp\z@}{!}{$\m@th#2$}}%
  }%
}
\theoremstyle{definition}
\newtheorem{theorem}{Theorem}
\newtheorem{lemma}[theorem]{Lemma}
\newtheorem{proposition}[theorem]{Proposition}
\newtheorem{definition}[theorem]{Definition}
\begin{document}


\title{Transcendence for Pisot Morphic Words over an Algebraic Base}

\author[1]{Pavol Kebis}
\author[2]{Florian Luca}
\author[3]{Joel Ouaknine}
\author[4]{Andrew Scoones}
\author[4]{James Worrell}

\affil[1]{Institute of Science and Technology Austria,  Klosterneuburg, Austria
\thanks{\texttt{pavol.kebis@ist.ac.at}}}

\affil[2]{Department of Mathematical Sciences, Stellenbosch University, South Africa
\thanks{\texttt{fluca@sun.ac.za}}}

\affil[3]{Max Planck Institute for Software Systems, Saarbrücken, Germany
\thanks{\texttt{joel@mpi-sws.org}}}

\affil[4]{Department of Computer Science, University of Oxford, UK
\thanks{\texttt{\{andrew.scoones,james.worrell\}@cs.ox.ac.uk}}}

\date{}
\maketitle

\begin{abstract}
  It is known that for a uniform morphic sequence
  $\boldsymbol u = \langle u_n\rangle_{n=0}^\infty$ and an algebraic
  number $\beta$ such that $|\beta|>1$, the number
  $\llbracket\boldsymbol{u} \rrbracket_\beta:=\sum_{n=0}^\infty
  \frac{u_n}{\beta^n}$ either lies in $\mathbb Q(\beta)$ or is
  transcendental.  In this paper we show a similar
  rational-transcendental dichotomy for sequences defined by
  irreducible Pisot morphisms.  Subject to the Pisot conjecture (an
  irreducible Pisot morphism has pure discrete spectrum), we
  generalise the latter result to arbitrary finite alphabets.  In
  certain cases we are able to show transcendence of
  $\llbracket\boldsymbol{u}\rrbracket_{\beta}$ outright.  In particular, for
  $k\geq 2$, if $\boldsymbol u$ is the $k$-bonacci word then
  $\llbracket\boldsymbol{u}\rrbracket_{\beta}$ is transcendental.
      \end{abstract}

\section{Introduction}
In 1968 Cobham~\cite{Cob2} conjectured that every number whose expansion
  in an integer base is an automatic sequence is either
  rational or transcendental~\cite{Cob2}.  Recall here that a sequence
  $\boldsymbol u=\langle u_n\rangle_{n=0}^{\infty}$ is $k$-automatic for
  $k \geq 2$ if there is automaton that outputs $u_n$ when given a
  base-$k$ representation of $n$ as input.  A $k$-automatic sequence
  can equivalently be characterised as the fixed point of a
  $k$-uniform morphism, that is, a morphism that maps each letter to
  string of length $k$.  The transcendence of irrational automatic
  numbers over an integer base was proven in 2004 by Adamczewski,
  Bugeaud, and Luca~\cite{ABL}, using Schlickewei's $p$-adic Subspace
  Theorem.
  
In~\cite{Cob2} Cobham also stated a more general version of his
  conjecture, in which the rational-transcendental dichotomy is
  posited for numbers whose expansion in an integer base is given by a
  morphism of exponential growth.  Recall here that a primitive
  morphism has exponential growth if its expansion factor (the
  Perron-Frobenius eigenvalue of its characteristic matrix) is
  strictly greater than one.  The expansion factor of a $k$-uniform
  morphism is $k$ and hence such morphisms have exponential growth for
  $k\geq 2$.  This second form of Cobham's conjecture was confirmed
  in~\cite{ACG} using a construction similar to that used in the case
  of uniform morphisms.  It is noted in~\cite{AB1} that the further
  extension to words generated by morphisms of merely polynomial growth
  encompasses several other longstanding open problems in
  transcendence theory.  

  Another natural generalisation of Cobham's conjecture involves a
  formulation for algebraic-number as opposed to integer bases:
  for $b\geq 2$,
  $\boldsymbol{u} \in \{0,1,\ldots,b-1\}^\omega$ a fixed point of a morphism of
  exponential growth, and $\beta$ an algebraic number, $|\beta|>1$,
  the number
  $\llbracket\boldsymbol{u}\rrbracket_{\beta}:=\sum_{n=0}^\infty
  \frac{u_n}{\beta^n}$ either lies in $\mathbb Q(\beta)$ or is
  transcendental.  Such a result was recently obtained in the case of
  uniform morphisms by Adamczewski and Faverjon~\cite{AF} using
  Mahler's method.  Beyond the case of uniform morphisms,
  transcendence results for morphic words over algebraic bases are
  more restricted.  However, the techniques of~\cite{AB1} can be
  extended to prove that for a word $\boldsymbol u$ and algebraic
  number $\beta$, subject to a non-trivial inequality between the
  height of $\beta$ and a measure of the periodicity of
  $\boldsymbol u$, called the \emph{Diophantine exponent}, the number
  $\llbracket\boldsymbol{u}\rrbracket_{\beta}$ is either
  transcendental or lies in $\mathbb{Q}(\beta)$; see~\cite[Theorem
  1]{AB2}.  This result can be applied in the case that
  $\boldsymbol u$ is the fixed point of a morphism of exponential
  growth and $\beta$ has height bounded above by a quantity determined
  by $\boldsymbol u$.
  
  The object of this paper is to make further progress on the
  extension of Cobham's conjecture to algebraic-number bases.  We
  focus on morphisms whose expansion factor is a \emph{Pisot number},
  that is, an algebraic integer whose Galois conjugates all have
  absolute value strictly less than one.  Pisot morphisms are of
  particular interest in view of the \emph{Pisot Substitution
    Conjecture}.  This says that the shift dynamical system associated
  with an irreducible Pisot morphism has pure point spectrum
  (see~\cite{Akiyama2015,host1989spectral} and~\cite[Chapter
  6.3]{Queffelec2010}) or, equivalently, is measurably isomorphic to a
  translation on a compact abelian group equipped with the Haar
  measure~\cite[Theorem 9.3]{Glasner2003}.  Notable positive examples
  of this conjecture include the Fibonacci morphism, whose associated
  shift dynamical system arises as the coding of a rotation on the
  unit circle, and the Tribonacci morphism, whose associated dynamical
  system arises as the coding, via the Rauzy fractal, of a translation
  on the two-torus~\cite{AR91,Rauzy1982}.
  
  One of our main contributions is to exhibit a link between the
  respective conjectures of Pisot and Cobham.  This is established via
  a combinatorial criterion on an infinite word called \emph{echoing},
  given in Definition~\ref{def:echoing}.  In Theorem~\ref{thm:main} we
  show that if $\boldsymbol u \in \{0,\ldots,b-1\}^{\omega}$ is
  echoing then for all algebraic numbers $\beta$, either
  $\llbracket\boldsymbol{u}\rrbracket_{\beta}$ is transcendental or it lies in the
  field $\mathbb Q(\beta)$.  We furthermore give a sufficient
  condition for a word to be echoing in terms of Livshits's
  balanced-pair algorithm: namely we show that a morphic word on which
  this algorithm terminates with coincidence is echoing.  It is known
  that the algorithm terminates with coincidence for any irreducible
  Pisot morphism on a binary alphabet.  Moreover, for an irreducible
  Pisot morphism on an arbitrary alphabet, the algorithm terminates
  with coincidence if and only if the morphism has pure discrete
  spectrum~\cite[Theorem 5.3]{Akiyama2015}.  We conclude that all
  words defined by irreducible Pisot morphisms on a binary alphabet
  are echoing and hence obey (the algebraic-number-base extension of)
  Cobham's conjecture.  Assuming the Pisot conjecture, words defined
  by irreducible Pisot morphisms on arbitrary finite alphabets are
  also echoing and thereby obey Cobham's conjecture.

  We further propose in Definition~\ref{def:echoing2} a strengthening
  of the echoing condition, called \emph{strongly echoing}, and show
  that if $\boldsymbol u \in \{0,\ldots,b-1\}^{\omega}$ is strongly
  echoing then $\llbracket\boldsymbol{u}\rrbracket_{\beta}$ is transcendental, i.e., we
  eliminate the eventuality that $\llbracket\boldsymbol{u}\rrbracket_{\beta}$ lies in
  $\mathbb Q(\beta)$.  By a combinatorial analysis of balanced pairs,
  we show that for all $k\geq 2$ the $k$-bonacci word is strongly
  echoing and hence that for any such word $\boldsymbol u$ the number
  $\llbracket\boldsymbol{u}\rrbracket_{\beta}$ is transcendental.  
  
  The present paper is a development of the conference
  paper~\cite{KebisLOS024}.  The combinatorial transcendence
  conditions presented below are based on a condition introduced
  therein and in the MSc thesis~\cite{PK}.  These conditions in turn
  build on ideas introduced in~\cite{LOW} to prove transcendence
  results for Sturmian sequences.  In the present paper we have
  divided the notion of echoing sequence into weak and strong
  variants, highlighting the importance of the non-vanishing condition
  in the formulation of the latter.  The applications to Pisot
  morphisms in Section~\ref{sec:binary}, including the connection with
  the Pisot conjecture, and $k$-bonacci sequences in
  Section~\ref{sec:kbon} did not appear in~\cite{KebisLOS024}.

\section{Preliminaries}
\label{sec:prelim}

\subsection{Morphic Sequences}
Consider an alphabet $\Sigma = \{0,\ldots,k-1\}$.
We endow the set
$\Sigma^{\omega}$ of infinite words over $\Sigma$ with the product topology, where $\Sigma$ has the
discrete topology.  The \emph{shift map} $\sigma :\Sigma^{\omega}\rightarrow \Sigma^{\omega}$ is defined by
$\sigma(u_0u_1u_2 \cdots) = u_1u_2u_3\cdots$.

A \emph{morphism} is a homomorphism
$\varphi : \Sigma^+ \rightarrow \Sigma^+$ of the free semigroup
$\Sigma^+$.  The \emph{incidence matrix}
$M_\varphi \in \mathbb N^{k\times k}$ of $\varphi$ is defined by
taking $(M_\varphi)_{i,j}$ to be the number of occurrences of the
symbol $j$ in $\varphi(i)$.  We say that $\varphi$ is \emph{primitive}
if some power of $M_\varphi$ is positive, and we say that $\varphi$
has \emph{exponential growth} if the spectral radius
$\rho(M_{\varphi})$ of $M_\varphi$ is strictly greater than one.  If
$M_\varphi$ is primitive, then by the Perron-Frobenius theorem its
spectral radius is a strictly positive real eigenvalue and all other
eigenvalues have absolute value strictly less than
$\rho(M_{\varphi})$.  We say that $\varphi$ is of \emph{Pisot type}
(or simply Pisot) if $\rho(M_{\varphi})$ is a Pisot number, that is,
all its Galois conjugates have absolute value strictly less than one.

Let $\varphi$ be a primitive morphism over alphabet $\Sigma$.  Assume
that $\varphi$ is \emph{prolongable}, that is, $\varphi(0) = 0u$ for
some word $u \in \Sigma^+$.  Then the sequence
$(\varphi^n(0))_{n\geq 0}$ of finite words converges to an infinite
word $\boldsymbol u$ that is a fixed point of $\varphi$.  The word
$\boldsymbol u$ is \emph{uniformly recurrent}, that is, each finite
factor occurs infinitely often and with bounded gaps between
successive occurrences.

We refer to the shift
dynamical system $(X_\varphi,\sigma)$, where $X_\varphi$ is the
topological closure of the orbit of $\boldsymbol u$ under the shift
map and, by a slight abuse of notation, $\sigma$ denotes the
restriction of the shift map to $X_\varphi$.  For a primitive morphism
this dynamical system has a unique ergodic measure $\mu$ and we have
a unitary operator
$U_\varphi : L^2(X_\varphi,\mu) \rightarrow L^2(X_\varphi,\mu)$ given
by $f\mapsto f\circ \sigma$.
The morphism $\varphi$ is said to have \emph{pure discrete spectrum} if the space
$L^2(X_\varphi,\mu)$ has a basis of eigenvectors of $U_\varphi$.

\subsection{Number Theory}
Let $K$ be a number field of degree $d$ over $\mathbb Q$ and let
$M(K)$ be the set of \emph{places} of $K$.  We divide $M(K)$ into the
collection of \emph{Archimedean places}, which are determined either
by an embedding of $K$ in $\mathbb{R}$ or a complex-conjugate pair of
embeddings of $K$ in $\mathbb{C}$, and the set of
\emph{non-Archimedean places}, which are determined by prime ideals in
the ring $\mathcal{O}_K$ of integers of $K$.

For $a \in K$ and $v \in M(K)$, define the absolute value $|a|_v$ as
follows: $|a|_v := |\sigma(a)|^{1/d}$ if $v$ corresponds to a real
embedding $\sigma:K\rightarrow \mathbb{R}$;
$|a|_v := |\sigma(a)|^{2/d}$ if $v$ corresponds to a complex-conjugate
pair of embeddings
$\sigma,\overline{\sigma}:K \rightarrow \mathbb{C}$; 
$|a|_v := N(\mathfrak{p})^{-\mathrm{ord}_{\mathfrak{p}}(a)/d}$ if $v$
corresponds to a prime ideal $\mathfrak{p}$ in $\mathcal{O}$ and
$\mathrm{ord}_{\mathfrak{p}}(a)$ is the order to which $\mathfrak{p}$
divides the ideal $a\mathcal{O}$.  With the above definitions we have
the \emph{product formula}: $\prod_{v \in M(K)} |a|_v = 1$ for all
$a \in K^\times$.  Given a set of places $S\subseteq M(K)$, the ring
$\mathcal{O}_S$ of \emph{$S$-integers} is the subring comprising all
$a \in K$ such $|a|_v \leq 1$ for all non-Archimedean places
$v\not\in S$.

For $m\geq 1$ the \emph{Weil height} of the projective point
$\boldsymbol{a}=[a_0 : a_1 : \cdots : a_m] \in \mathbb{P}^m(K)$ is 
\[ H(\boldsymbol{a}):=\prod_{v \in M(K)}\max(|a_0|_v,\ldots,|a_m|_v)
  \, .\] This definition is independent of the choice of the field $K$
containing $a_0,\ldots,a_m$.  We define the height $H(a)$ of $a \in K$
to be the height $H([1 : a])$ of the corresponding point in
$\mathbb{P}^1(K)$.  For a non-zero Laurent polynomial
$f = x^n \sum_{i=0}^m a_i x^i \in K[x,x^{-1}]$, where $m\geq 1$ and
$n\in \mathbb Z$, following~\cite{LEN97} we define its height $H(f)$
to be the height $H([a_0 : \cdots : a_m])$ of the vector of
coefficients.

The following special case\footnote{We formulate the special case of
  the Subspace Theorem in which all but one of the linear forms are
  coordinate variables.} of the $p$-adic Subspace Theorem of
Schlickewei~\cite{Schlickewei76} is one of the main ingredients of our
approach.  
\begin{theorem}
  Let $S \subseteq M(K)$ be a finite set of places of $K$ that
  contains all Archimedean places.  Let $v_0 \in S$ be a distinguished
  place and choose a continuation of $|\cdot |_{v_0}$ to
  $\overline{\mathbb Q}$, also denoted $|\cdot |_{v_0}$.  Given
  $m\geq 2$, let $L(x_1,\ldots,x_{m})$ be a linear form with algebraic
  coefficients and let $i_0 \in \{1,\ldots,m\}$ be a distinguished
  index such that $x_{i_0}$ has non-zero coefficient in
  $\overline{\mathbb Q}$.  Then for
  any $\varepsilon>0$ the set of solutions
  $\boldsymbol{a}=(a_1,\ldots,a_m) \in (\mathcal{O}_S)^m$ of the
  inequality
  \[ |L(\boldsymbol{a})|_{v_0}\cdot \Bigg( \prod_{\substack{(i,v)\in
        \{1,\ldots,m\}\times S\\(i,v) \neq (i_0,v_0)}}|a_i|_v \Bigg)
    \leq H(\boldsymbol{a})^{-\varepsilon} \] is contained in a finite union
  of proper linear subspaces of $K^m$.
\label{thm:SUBSPACE}
\end{theorem}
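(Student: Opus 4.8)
The statement is Schlickewei's $p$-adic generalisation of the Schmidt Subspace Theorem, which in this paper we invoke as a black box rather than reprove. For completeness I indicate the shape that a proof takes; it follows the Thue--Siegel--Roth--Schmidt method, adapted to several variables and to an arbitrary finite set of places. \textbf{Reduction.} One first adjoins the coefficients of $L$ to $K$ and enlarges $S$ so that $L$ has coefficients in $K$ and so that $S$ still contains all Archimedean places; by scaling one may assume $x_{i_0}$ occurs in $L$ with coefficient $1$. Clearing denominators (absorbing the cost into the $S$-units), one reduces to studying, for each large solution $\boldsymbol a \in (\mathcal O_S)^m$, how far the point $[\boldsymbol a] \in \mathbb P^{m-1}(K)$ is approximated by the hyperplane $L=0$ relative to the coordinate hyperplanes, measured jointly over all $v\in S$.

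\textbf{Geometry of numbers.} To each such $\boldsymbol a$ one attaches a convex symmetric body, a parallelepiped assembled from the linear forms $x_1,\ldots,x_m$ together with $L$, scaled at each place of $S$ in accordance with the solution inequality; the hypothesis that $\boldsymbol a$ is a solution forces this body to have very small volume. By Minkowski's second theorem its successive minima are then badly spread, so some proper coordinate-plus-$L$ subspace of $K^m$ captures most of $\boldsymbol a$. Passing to exterior powers and invoking Mahler's theory of compound convex bodies, one extracts from a putative infinite family of solutions a subfamily for which the distinguished "small" subspace is, up to bounded index, a single fixed rational subspace $T \subsetneq K^m$. It then suffices to rule out an infinite family of solutions whose approximation is realised along $T$.

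\textbf{Auxiliary polynomial and Roth's lemma.} Given finitely many such solutions $\boldsymbol a^{(1)},\ldots,\boldsymbol a^{(r)}$ of rapidly increasing height, a Siegel-lemma (pigeonhole) argument produces a nonzero multihomogeneous polynomial $P$ in $r$ blocks of $m$ variables, of prescribed degree in each block and with $S$-integer coefficients of controlled height, vanishing to high index at $(\boldsymbol a^{(1)},\ldots,\boldsymbol a^{(r)})$ relative to the filtration induced by $T$. On one hand, evaluating $P$ at this point and applying the solution inequality simultaneously at every place of $S$, together with the product formula, yields an upper bound on $|P(\boldsymbol a^{(1)},\ldots,\boldsymbol a^{(r)})|_{v_0}$ too small for a nonzero algebraic number unless the index is small; on the other hand, the generalised Roth lemma (or the sharper Dyson--Esnault--Viehweg estimate) bounds the index from above. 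Choosing the degrees and the number $r$ of solutions so that these bounds conflict forces all but finitely many solutions into $T$, and restricting the whole configuration to $T$ is a lower-dimensional instance of the same statement, so induction on $m$ concludes.

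\textbf{Main obstacle.} The genuinely hard step is the non-vanishing input: controlling from above the index to which the auxiliary polynomial can vanish at the distinguished point. In one variable this is Roth's lemma; in several variables one needs either Roth's inductive lemma (which blows $r$, and hence the entire construction, up enormously) or the sharper Dyson-type estimate, and one must make it work uniformly across the non-Archimedean places of $S$ while tracking heights through the exterior-power and compound-body reduction. This is where essentially all the difficulty of Schlickewei's theorem lies, and it is precisely why we use the result as stated rather than develop it here.
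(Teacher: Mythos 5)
The paper gives no proof of this statement: it is quoted verbatim as a special case of Schlickewei's $p$-adic Subspace Theorem with a citation to the original work, exactly as you treat it. Your decision to invoke it as a black box matches the paper, and your outline of the underlying Thue--Siegel--Roth--Schmidt machinery (geometry of numbers, Mahler's compound bodies, the auxiliary polynomial, and Roth's lemma) is a faithful summary of the standard proof, so nothing further is required.
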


We will need the following proposition about roots of
univariate polynomials.
\begin{proposition}{\cite[Proposition 2.3]{LEN97}}
  Let $f \in K[x,x^{-1}]$ be a Laurent polynomial with at most 
  $k+1$ terms.  Assume that $f$ can be written as the sum of two 
  polynomials $g$ and $h$, where every monomial of $g$ has degree at 
  most $d_0$ and every monomial of $h$ has degree at least $d_1$. 
  Let $\beta$ be a root of $f$ that is not a root of unity.  If 
  $d_1-d_0> \frac{\log (k \, H(f))}{\log H(\beta) }$ then $\beta$ is a 
  common root of $g$ and $h$. 
\label{prop:gap}
  \end{proposition}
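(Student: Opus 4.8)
The plan is to argue by contradiction using the product formula in the number field $L=K(\beta)$. Suppose $g(\beta)\neq 0$. Since $f(\beta)=g(\beta)+h(\beta)=0$, the number $a:=g(\beta)=-h(\beta)$ is a nonzero element of $L$, and the whole argument reduces to proving the estimate $H(\beta)^{d_1-d_0}\le k\,H(f)$: taking logarithms and using $\log H(\beta)>0$ — valid because $\beta$ is not a root of unity (and is nonzero, being a root of a Laurent polynomial that we may clear to a genuine polynomial), hence $H(\beta)>1$ by Kronecker's theorem — this directly contradicts the hypothesis $d_1-d_0>\frac{\log(k\,H(f))}{\log H(\beta)}$. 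We may assume both $g$ and $h$ are nonzero, as otherwise $a=0$ outright.

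The observation that makes the bookkeeping clean is that the gap $d_1>d_0$ forces $g$ and $h$ to have disjoint supports. Consequently every coefficient of $g$ and of $h$ occurs among the coefficients $a_0,\ldots,a_m$ of $f$, so that $\max_t|g_t|_v\le\max_i|a_i|_v$ and $\max_t|h_t|_v\le\max_i|a_i|_v$ at every place $v$ of $L$; moreover the numbers of terms of $g$ and of $h$ are each at most $k$, since together they total at most $k+1$.

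First I would bound $|a|_v$ place by place, choosing at each place the representation of $a$ best adapted to the size of $\beta$. At a place $v$ with $|\beta|_v>1$, from $a=g(\beta)=\sum_t g_t\beta^t$ with all exponents $t\le d_0$ one gets $|a|_v\le\varepsilon_v\,|\beta|_v^{d_0}\max_i|a_i|_v$, where $\varepsilon_v=1$ if $v$ is non-Archimedean and $\varepsilon_v$ is the number of terms of $g$ (hence $\le k$) if $v$ is Archimedean. At a place $v$ with $|\beta|_v\le 1$, from $a=-h(\beta)=-\sum_t h_t\beta^t$ with all exponents $t\ge d_1$ one gets $|a|_v\le\varepsilon_v\,|\beta|_v^{d_1}\max_i|a_i|_v$, where now $\varepsilon_v=1$ (non-Archimedean) or the number of terms of $h$ (Archimedean, $\le k$). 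Multiplying these inequalities over all $v\in M(L)$ and invoking the product formula for $a$ (the left-hand side becomes $1$), the identity $\prod_{v:|\beta|_v>1}|\beta|_v=H(\beta)$ together with the product formula for $\beta$ (so that the $\beta$-powers contribute exactly $H(\beta)^{d_0-d_1}$), and $\prod_v\max_i|a_i|_v=H(f)$ by the definition of $H(f)$, yields $1\le \bigl(\prod_v\varepsilon_v\bigr)\,H(\beta)^{d_0-d_1}\,H(f)$. Finally $\prod_v\varepsilon_v\le k$: the non-Archimedean factors are $1$, and since in the chosen normalisation the weights of the Archimedean places sum to $1$, the Archimedean factors — each at most $k$, and split between the places with $|\beta|_v>1$ (total weight $s\in[0,1]$) and those with $|\beta|_v\le 1$ — multiply to at most $k^{s}\cdot k^{1-s}=k$. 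This gives $H(\beta)^{d_1-d_0}\le k\,H(f)$, the desired contradiction, and hence $g(\beta)=0$, whereupon $h(\beta)=-g(\beta)=0$ as well.

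The main obstacle is precisely this last point: one must use the $g$-representation at exactly the places where $\beta$ is large and the $h$-representation where $\beta$ is small, so as never to incur the triangle-inequality term-count factor for $g$ and for $h$ simultaneously; combined with the Archimedean weights summing to $1$, this is what keeps the combinatorial constant at $k$ rather than something like $k^{2}$ or $k+1$. Everything else is routine manipulation of Weil heights and the product formula.
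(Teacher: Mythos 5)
Your argument is correct and complete: it is precisely the standard product-formula proof of this gap principle (use the low-degree part $g$ at the places where $|\beta|_v>1$ and the high-degree part $h$ where $|\beta|_v\le 1$, then invoke the product formula for $g(\beta)$ and for $\beta$, with the Archimedean weights summing to $1$ to keep the term-count factor at $k$). The paper itself states this result as a citation of \cite[Proposition 2.3]{LEN97} and gives no proof, and your reconstruction matches the argument in that reference.
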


   We will also need the following lower bound~\cite[Chapter 3]{Wald2000}.
  \begin{proposition}
    Let $f \in \mathbb Z[x]$ have degree $d$ and let $L$ be the sum
    of the absolute value of its coefficients.  If
    $\beta \in \overline{\mathbb Q}$ is not a root of $f$ then
\[ |f(\beta)| > \frac{1}{c^d L^{\mathrm{deg}(\beta)}} \, , \]
where $c$ is a  constant that depends only on the height of $\beta$.
\label{prop:lower}
\end{proposition}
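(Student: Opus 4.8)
The plan is to bound $|f(\beta)|$ from below by a Liouville-type argument: the resultant of $f$ with the minimal polynomial of $\beta$ is a \emph{nonzero} rational integer, hence has absolute value at least $1$, and this lower bound can be ``spread'' over the Galois conjugates of $\beta$, all but one of which are controlled by the trivial estimate $|f(z)| \le L\max(1,|z|)^{d}$ coming from the triangle inequality.

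Concretely, write $e = \deg(\beta)$, let $g(x) = b_0x^{e}+\cdots \in \mathbb Z[x]$ be the primitive minimal polynomial of $\beta$ (with leading coefficient $b_0\ge 1$), and let $\beta = \beta^{(1)},\ldots,\beta^{(e)}$ be the conjugates of $\beta$. Since $g$ is irreducible over $\mathbb Q$ while $\beta$ is a root of $g$ but not of $f$, the polynomials $g$ and $f$ are coprime in $\mathbb Q[x]$, so $R := \mathrm{Res}(g,f) = b_0^{d}\prod_{i=1}^{e} f(\beta^{(i)})$ is a nonzero integer and therefore $|R|\ge 1$. Combining this with the bound $|f(\beta^{(i)})| \le L\max(1,|\beta^{(i)}|)^{d}$ applied to the conjugates $i=2,\ldots,e$ and isolating the factor $i=1$, we obtain
\[
1 \;\le\; |R| \;=\; b_0^{d}\,|f(\beta)|\prod_{i=2}^{e}|f(\beta^{(i)})| \;\le\; |f(\beta)|\,L^{e-1}\Bigl(b_0\prod_{i=2}^{e}\max(1,|\beta^{(i)}|)\Bigr)^{d}\,.
\]
Now $b_0\prod_{i=2}^{e}\max(1,|\beta^{(i)}|)$ is at most the Mahler measure $M(\beta) = b_0\prod_{i=1}^{e}\max(1,|\beta^{(i)}|) = H(\beta)^{\deg(\beta)}$, and $L\ge 1$ since $f\ne 0$; so rearranging gives $|f(\beta)| \ge M(\beta)^{-d}L^{-(e-1)} \ge M(\beta)^{-d}L^{-\deg(\beta)}$. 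Taking $c := M(\beta)+1$, a constant depending only on $\beta$ (equivalently on $H(\beta)$ and $\deg(\beta)$), upgrades this to the required strict inequality.

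I do not expect a serious obstacle here: this is a classical estimate. The one step that is genuinely essential rather than routine is the non\-vanishing of the resultant, which is precisely where irreducibility of the minimal polynomial of $\beta$ is used; the remaining points — the bookkeeping identifying $b_0\prod_i\max(1,|\beta^{(i)}|)$ with the Mahler measure (hence with a power of the Weil height), and the harmless inflation of the constant from $M(\beta)$ to $M(\beta)+1$ to pass from $\ge$ to $>$ — are straightforward.
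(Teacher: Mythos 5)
Your argument is correct: the paper offers no proof of this proposition, citing it to Waldschmidt, and your resultant-plus-conjugates derivation is exactly the classical Liouville inequality that the cited reference establishes (there via the product formula for heights, an equivalent adelic phrasing of the same estimate). The only cosmetic point is that your constant $c=M(\beta)+1=H(\beta)^{\deg\beta}+1$ depends on the degree of $\beta$ as well as its height, but this imprecision is already present in the paper's own statement and is harmless for how the proposition is used.
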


 \section{Echoing Words}
\label{sec:motivate}
In this section we present the main definition of the paper---the
notion of echoing word.  To motivate this we first
present an informal analysis of periodicity properties of the
Fibonacci and Tribonacci words: two well-known examples of Pisot
morphic words.

\subsection{The Fibonacci Word}
\label{sec:fib}
Let $\Sigma=\{0,1\}$ and consider the morphism 
$\varphi :\Sigma^+\rightarrow \Sigma^+$ given by 
$\varphi(0)=01$ and $\varphi(1)=0$.
The \emph{Fibonacci word} $\boldsymbol u_{\mathrm{Fib}} \in \Sigma^\omega$
is the morphic word
\[ \boldsymbol u_{\mathrm{Fib}}  := \lim_{n\rightarrow \infty}\varphi^n(0) =
  01001010010010100 \ldots \,  . \]

The Fibonacci word is not periodic and hence $\boldsymbol u_{\mathrm{Fib}}$ is not equal
to any of its tails $\sigma^n(\boldsymbol u_{\mathrm{Fib}})$ for $n >0$.  However, if the
shift $n$ is well chosen then the mismatches between $\boldsymbol u_{\mathrm{Fib}}$
and $\sigma^n(\boldsymbol u_{\mathrm{Fib}})$ are sparse.  This intuition
will be formalised in the
definition of echoing word.  It turns out that a
particularly good choice of shifts are the elements of the sequence
$\langle 1,2,3,5,8\ldots\rangle$ of Fibonacci numbers: for example,
vertically aligning $\boldsymbol u_{\mathrm{Fib}}$ and $\sigma^5(\boldsymbol u_{\mathrm{Fib}})$ and underlining
mismatches, we have:
  \begin{align*}
 \boldsymbol u_{\mathrm{Fib}} := \,  & 010010\underline{10}01001010010\underline{10}010010\underline{10}0100101001 
      \ldots \\
 \sigma^5(\boldsymbol u_{\mathrm{Fib}}) := \,   & 010010\underline{01}01001010010\underline{01}010010\underline{01}0100101001 \ldots 
  \end{align*}
  Observe that each mismatch involves a factor 10 of $\boldsymbol u_{\mathrm{Fib}}$
  for which the corresponding factor in 
  $\sigma^5(\boldsymbol u_{\mathrm{Fib}})$ is the reverse, 01.  In fact, we see
  the same phenomenon for all shifts of $\boldsymbol u_{\mathrm{Fib}}$ by an element of
  the Fibonacci sequence.  Furthermore, it turns out that for each
  successive shift, the distance between the  mismatched factors
  increases.  This is formalised below as the \emph{expanding gaps
    property}. 
  \subsection{The Tribonacci Word}
\label{sec:trib}
  The following example illustrates some more complicated phenomena
  related to matching a substitutive sequence against shifts of itself.
Let $\Sigma=\{0,1,2\}$ and consider the morphism 
$\varphi :\Sigma^+\rightarrow \Sigma^+$ given by 
$\varphi(0)=01$, $\varphi(1)=02$, and $\varphi(2)=0$. 
The \emph{Tribonacci word} $\boldsymbol u_{\mathrm{Tri}} \in \Sigma^\omega$
is the morphic word 
\[ \boldsymbol u_{\mathrm{Tri}} := \lim_{n\rightarrow \infty}\varphi^n(0) = 0102010010201 \ldots \, .\]
Associated with the Tribonacci word we have the sequence
$\langle t_n \rangle_{n=0}^\infty$ of Tribonacci numbers, defined by
the recurrence $t_n = t_{n-1}+t_{n-2}+t_{n-3}$ and initial conditions
$t_0=1,t_1=2,t_2=4$.  It is easy to see that the word $\varphi^n(0)$ has length $t_n$ for all
$n\in\mathbb{N}$.  

In the spirit of our analysis of the Fibonacci word, we match the
Tribonacci word against shifts of itself by elements of the Tribonacci
sequence $\langle 1,2,4,7,13,24,\ldots \rangle$.  For example,
comparing $\boldsymbol u_{\mathrm{Tri}}$ and $\sigma^{13}(\boldsymbol u_{\mathrm{Tri}})$ we have:
\begin{align*}
  \boldsymbol u_{\mathrm{Tri}} := &
01020100102010\underline{102}0\underline{10}0\underline{102}01020100102010\underline{102010}01020100102010\underline{102}
                \ldots \\
  \sigma^{13}(\boldsymbol u_{\mathrm{Tri}}) := &
   01020100102010\underline{201}0\underline{01}0\underline{201}01020100102010\underline{010201}01020100102010\underline{201}
                                  \ldots
\end{align*}                                  
Notice that the mismatches, above,
appear as a fixed set of factors (either 10, 20, or 102) in $\boldsymbol u_{\mathrm{Tri}}$
that get reversed in $ \sigma^{13}(\boldsymbol u_{\mathrm{Tri}})$.  Unlike with the
Fibonacci word, this time the factors may appear close to each other.
Nevertheless, by suitably grouping neighbouring mismatch factors, we recover a form of
the expanding gaps property and we are moreover able to show that the
mismatches between $\boldsymbol u_{\mathrm{Tri}}$ and its shifts are relatively sparse.

\subsection{Definition of Echoing Words}
      \label{sec:echoing}
      Drawing on the respective examples of the Fibonacci and
      Tribonacci words, we give in this section the formal definition
      of echoing word.  To this end, we introduce the following
      notation and terminology.  Given two non-empty intervals
      $I,J \subseteq \mathbb{N}$, write $I<J$ if $a<b$ for all
      $a\in I$ and $b\in J$, and define the distance of $I$ and $J$ to
      be $d(I,J):=\min\{|a-b|:a\in I,b\in J\}$.  Furthermore define
      the density of a non-empty finite set $S\subseteq \mathbb N$ to be
      $\mathrm{den}(S):=\frac{|S|}{\max(S)}$.  For two functions
      $f(n)$ and $g(n)$ we write $f\ll g$ if there exist positive
      constants $n_0,c_1$ such that for all $n\geq n_0$ we have
      $f(n) \leq c_1 g(n)$.  We also write $f\asymp g$ if both $f\ll g$
      and $g\ll f$.
      
      \begin{definition}
        Let $\Sigma:=\{0,\ldots,b-1\}$ be a finite alphabet. An
        infinite word
        $\boldsymbol{u}=u_0u_1u_2\ldots \in \Sigma^{\omega}$ is
        said to be \emph{echoing} if given $\varepsilon>0$ there exist integers
        $0\leq r_n < s_n$
        and non-empty intervals  $\{0\} = I_{n,0} < I_{n,1} < I_{n,2}
        < \cdots$, for every $n\in\mathbb N$, with the following properties:
        \begin{enumerate}
        \item (Covering): 
  $\left\{ i \in \mathbb N : u_{i+s_n} \neq u_{i+r_n} \right\}
  \subseteq  \bigcup_{j=1}^\infty I_{n,j}$; 
\item  (Density): $\mathrm{den}\left(\bigcup_{j=1}^\delta   I_{n,j} \right) \leq 
    \varepsilon$ for all sufficiently large $\delta$ and $n$;
  \item (Expanding Gaps): we have $s_n<s_{n+1}$ for all $n$ and
    as $n$ tends to infinity we have
    $s_n-r_n \gg s_n$, and $d(I_{n,j},I_{n,j+1}) \gg s_n$, where the implied
    constants are independent of $j \in \mathbb N$.
\end{enumerate}
  \label{def:echoing}
\end{definition}
The intuition behind this definition is as follows.  Conditions 1 and
2 concern the alignment of the suffixes $\sigma^{r_n}(\boldsymbol u)$
and $\sigma^{s_n}(\boldsymbol u)$ of $\boldsymbol u$, where
$0\leq r_n< s_n$.  They say that the set of discrepancies between
these two suffixes is contained in a union of intervals of asymptotic
density at most $\varepsilon$.  Condition 3 says that as $n$ tends to
infinity the distance between consecutive intervals grows linearly with
$s_n$.

We will see that the Fibonacci and Tribonacci words are both echoing.
As suggested by the examples above, in the case of the Fibonacci word
a suitable choice of the sequence $\langle s_n\rangle_{n=0}^\infty$
of shifts, is the sequence of Fibonacci numbers, while in the case of
the Tribonacci word it is the sequence of Tribonacci numbers.  In
the case of the Fibonacci word the intervals $I_{n,j}$ are doubletons,
whereas in the case of the Tribonacci word their total length grows
with $n$,  although the asymptotic density of $\bigcup_{j=1}^\infty
I_{n,j}$ tends to zero as $n$ tends to infinity.

\section{Binary Pisot Morphic Words Are Echoing}
\label{sec:binary}
\subsection{Balanced Pairs}
A pair of words $(x,y) \in \Sigma^+\times \Sigma^+$ is said to be
\emph{balanced} if $x$ and $y$ have the same commutative image.  A
balanced pair is said to be \emph{irreducible} if it cannot be
decomposed as a product (in the semigroup $\Sigma^+\times \Sigma^+$)
of two or more balanced pairs.  For $x \in \Sigma$, we call the
balanced pair $(x,x)$ is a \emph{coincidence}.  An irreducible
balanced pair that is not a coincidence is called a \emph{mismatch}

Let $\varphi:\Sigma^+ \rightarrow \Sigma^+$ be a morphism.  Notice that
if $(x,y)$ is a balanced pair then so is
$(\varphi(x),\varphi(y))$.  A finite set
$\Gamma \subseteq \Sigma^+\times \Sigma^+$ of irreducible balanced
pairs is said to be \emph{closed for $\varphi$} if for all
$(x,y) \in \Gamma$ we can write
$(\varphi(x),\varphi(y))$ as a product
of balanced pairs in $\Gamma$.
If $\Gamma$ is closed then $\varphi$ naturally lifts to a morphism
$\widetilde{\varphi}$ on $\Gamma$ such that
$\widetilde{\varphi}((x,y))$ is  the decomposition of
$(\varphi(x),\varphi(y))$ as a product of elements of
$\Gamma$.  We note that $\widetilde{\varphi}$ necessarily maps a
coincidence to a string of coincidences.  We say that
$\widetilde{\varphi}$  satisfies the \emph{coincidence condition} if
for all $(x,y) \in \Gamma$ there exists $k\geq 0$ such that
the string $\widetilde{\varphi}^k((x,y))$ contains a
coincidence.  It is not difficult to see that if $\widetilde{\varphi}$
is satisfies the \emph{coincidence condition} then the asymptotic
upper density of mismatch symbols in
$\widetilde{\varphi}^k((x,y))$ decays to zero at an exponential rate
in $k$.

Let $\boldsymbol u$ be a fixed point of a primitive morphism
$\varphi$.  Given a non-empty prefix $x$ of
$\boldsymbol u$, since $\boldsymbol u$ is uniformly recurrent we can
write
$\boldsymbol u = x  y x \boldsymbol
u'$ for some $y \in \Sigma^*$ and suffix
$\boldsymbol u' \in \Sigma^{\omega}$.  Given as input the prefix $x$,
the \emph{balanced-pair algorithm}~\cite{Livshits1987,Livshits1992}
successively computes all irreducible factors of the balanced pairs
$(\varphi^k(xy),\varphi^k(yx))$ for
$k=1,2,\ldots$.  We say that the algorithm \emph{terminates with
  coincidence} if the resulting set $\Gamma$ of irreducible factors is
finite and the lifting of $\varphi$ to a morphism
$\widetilde{\varphi}$ on $\Gamma$ satisfies the coincidence condition.

Concerning the termination of the balanced-pair algorithm,
we have the following result of Hollander and 
 Solomayak~\cite{Hollander2003TwosymbolPS} (see also~\cite[Theorem 5.3]{Akiyama2015}).
 \begin{theorem}[Hollander and Solomayak]
Given an irreducible Pisot substitution $\varphi$, the substitutive
dynamical system $(X_\varphi,\sigma)$ has pure discrete spectrum
if and only if the balanced-pair algorithm for $\varphi$ terminates
with coincidence.
\label{thm:HS}
\end{theorem}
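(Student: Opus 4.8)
The plan is to route both conditions through the geometric realisation of $\varphi$ by its Rauzy fractal, exploiting the classical equivalence between pure discrete spectrum and a \emph{tiling} (as opposed to merely covering) property of the associated sub-tiles, together with the fact that the balanced-pair algorithm is precisely a combinatorial certificate for that tiling property.

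First I would set up the geometry. Writing $d=|\Sigma|$ and $\lambda=\rho(M_\varphi)$, split $\mathbb{R}^d=\mathbb{R}v_\lambda\oplus H_c$ into the expanding Perron line and the contracting invariant complement $H_c$ (on which $M_\varphi$ acts as a contraction, since the Galois conjugates of $\lambda$ lie strictly inside the unit disc), and let $\pi_c$ be the projection onto $H_c$ along $v_\lambda$. Projecting the abelianisations of the prefixes of the fixed point $\boldsymbol u$ produces the Rauzy fractal $\mathcal{T}=\overline{\{\pi_c(\mathbf{l}(u_0\cdots u_{n-1})):n\geq 0\}}$, which decomposes into sub-tiles $\mathcal{T}_a$ according to the letter $u_n$ read at step $n$. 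One verifies that $\mathcal{T}$ is compact with non-empty interior, that the $\mathcal{T}_a$ always \emph{cover} a fundamental domain for the lattice $\pi_c(\mathbb{Z}_0^d)$ (integer vectors of coordinate sum $0$), and that the resulting surjection $g:X_\varphi\to G$, where $G=H_c/\pi_c(\mathbb{Z}_0^d)$ is a compact abelian group (a torus in the unimodular case), onto a group translation is a model of the maximal equicontinuous factor. Hence $\varphi$ has pure discrete spectrum if and only if $g$ is measure-theoretically injective, if and only if the sub-tiles $\mathcal{T}_a$ have pairwise measure-disjoint interiors.

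Next I would reinterpret the balanced-pair algorithm in these terms. A balanced pair records two admissible words with equal Parikh vector, i.e.\ two segments of the discrete line whose endpoints project to the same point of $H_c$; applying $\widetilde\varphi$ refines such a configuration under the inflation; a coincidence $(a,a)$ records a genuine overlap of two copies of $\mathcal{T}_a$, whereas a mismatch records a region in which two \emph{distinct} sub-tiles overlap. Two things then need to be shown: (i) \emph{finiteness}, namely that the set of irreducible balanced pairs generated from the seed $(xy,yx)$ is finite --- this is where the Pisot hypothesis is decisive, the contraction on $H_c$ confining the relevant overlap configurations to a bounded region and hence to finitely many types (the Hollander--Solomayak boundedness argument, close in spirit to finiteness of the overlap set in the overlap algorithm); and (ii) that the algorithm terminates \emph{with coincidence} exactly when every pairwise overlap of sub-tiles has Lebesgue measure zero. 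For the ``if'' half of (ii) the observation recorded in the excerpt is the crux: the coincidence condition forces the density of mismatch symbols in $\widetilde\varphi^k((x,y))$ to decay exponentially, and by equidistribution of the projected staircase this pushes the overlap regions to measure zero, giving pure discrete spectrum. For the ``only if'' half, if all overlaps are measure-null then, using uniform recurrence and the self-similarity $\widetilde\varphi$, every balanced pair that arises must --- after finitely many inflations --- fragment into pieces of arbitrarily small support density; a pigeonhole argument over the finitely many balanced-pair types from (i) then upgrades this to the assertion that every balanced pair eventually produces a literal coincidence.

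The hard part will be the passage, inside (ii), from ``overlap has measure zero'' to ``$\widetilde\varphi^k$ produces an actual coincidence in finitely many steps'': the former is an asymptotic statement whereas the latter is an exact combinatorial one, and bridging them needs the rigidity of the self-affine structure --- in essence that the boundary $\partial\mathcal{T}$ is a graph-directed self-affine set of Lebesgue measure zero and that a measure-null overlap of two inflation-tiles cannot persist indefinitely without the tiles sharing a refined piece. Establishing these boundary estimates and tracking the combinatorics of balanced pairs under $\widetilde\varphi$ is the technically heaviest ingredient. For the purposes of this paper it is enough to invoke \cite{Hollander2003TwosymbolPS} together with \cite[Theorem 5.3]{Akiyama2015}.
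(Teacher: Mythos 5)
The paper does not prove this statement: Theorem~\ref{thm:HS} is an attributed external result, quoted from Hollander--Solomyak and \cite[Theorem 5.3]{Akiyama2015} and used as a black box, so the honest comparison is between your sketch and the literature rather than anything in the text. Your closing sentence --- that for the purposes of this paper it suffices to invoke the citations --- is exactly what the authors do, and is the only part of the proposal that matches the paper.

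Taken as a proof, the sketch has a concrete gap at step (i). You assert that finiteness of the set of irreducible balanced pairs generated from the seed follows from the Pisot hypothesis alone, via a boundedness argument in the contracting space $H_c$. That argument controls the \emph{geometric overlap types} (the projections of prefix-difference vectors), but an irreducible balanced pair is a combinatorial object that can be long even when its associated overlap data are bounded, and termination of the balanced-pair algorithm is not known to be automatic for irreducible Pisot substitutions; in Hollander--Solomyak the implication runs the other way, with pure discrete spectrum being used to establish termination in the forward direction of the equivalence. If termination really followed from the Pisot property alone, the theorem would reduce to the coincidence condition, which is not how the result is stated or proved. Beyond this, several steps you label ``one verifies'' are themselves substantial theorems (non-empty interior of $\mathcal{T}$, the multiple-tiling property of the subtiles, the identification of $g$ as the maximal equicontinuous factor and the criterion that pure discreteness is equivalent to its a.e.\ injectivity), the non-unimodular case requires replacing $H_c$ by a representation space with $p$-adic factors rather than a Euclidean complement, and the passage from measure-zero overlaps to a literal coincidence after finitely many inflations --- which you correctly identify as the hard point --- is left entirely open. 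So the proposal is a reasonable map of the geometric route through Rauzy fractals (which is genuinely different in flavour from Hollander--Solomyak's more directly spectral/combinatorial argument), but it is an outline with acknowledged and unacknowledged gaps, not a proof.
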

Based on the above result and work of Barge and
Diamond~\cite{Barge2002}, we have the following result in the case of
binary alphabets:
    \begin{theorem}[Barge and Diamond]
      Let $\varphi$ be an irreducible Pisot morphism on a two-letter
      alphabet with fixed point $\boldsymbol u$.  Then there exists a
      non-empty prefix $x$ of $\boldsymbol u$ such that balanced-pair
      algorithm terminates with coincidence on input $x$.
  \label{thm:BD}
\end{theorem}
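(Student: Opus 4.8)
The plan is to reduce Theorem~\ref{thm:BD} to the Hollander--Solomayak criterion (Theorem~\ref{thm:HS}) via the theorem of Barge and Diamond~\cite{Barge2002}, which states that every irreducible Pisot substitution on a two-letter alphabet has pure discrete spectrum. Concretely, given an irreducible Pisot morphism $\varphi$ on $\Sigma = \{0,1\}$ with fixed point $\boldsymbol u$, the Barge--Diamond result tells us that $(X_\varphi,\sigma)$ has pure discrete spectrum, and then Theorem~\ref{thm:HS} immediately yields that the balanced-pair algorithm for $\varphi$ terminates with coincidence. The only thing requiring care is the interface: Theorem~\ref{thm:HS} as stated guarantees termination with coincidence for \emph{some} run of the algorithm, while the statement we want produces a \emph{specific} non-empty prefix $x$; I would note that the balanced-pair algorithm as defined in the preliminaries is parametrised by the initial prefix $x$ (through the decomposition $\boldsymbol u = xyx\boldsymbol u'$), so it suffices to observe that the criterion holds for an appropriate choice of $x$.

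First I would recall the setup: since $\varphi$ is primitive and prolongable (after passing to a suitable power if necessary, which does not change $X_\varphi$ or the spectral type), it has a fixed point $\boldsymbol u$ starting with the letter $0$. Being the fixed point of a primitive morphism, $\boldsymbol u$ is uniformly recurrent, so for the prefix $x = 0$ (or any non-empty prefix) we may write $\boldsymbol u = xyx\boldsymbol u'$ with $y \in \Sigma^*$, and the balanced-pair algorithm on input $x$ is well-defined and iteratively computes the irreducible factors of $(\varphi^k(xy),\varphi^k(yx))$. Second, I would invoke Barge--Diamond to conclude $(X_\varphi,\sigma)$ has pure discrete spectrum. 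Third, I would invoke Theorem~\ref{thm:HS} in the forward direction: pure discrete spectrum implies the balanced-pair algorithm terminates with coincidence; unpacking the definition preceding Theorem~\ref{thm:HS}, this means that the set $\Gamma$ of irreducible balanced pairs generated is finite and that the induced morphism $\widetilde\varphi$ on $\Gamma$ satisfies the coincidence condition.

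The main obstacle, such as it is, is purely expository rather than mathematical: one must check that the version of the balanced-pair algorithm fixed in Section~\ref{sec:binary} (seeded by a chosen prefix $x$) is the one to which Hollander--Solomayak's equivalence applies, and that the choice of $x$ does not matter for the conclusion. For a uniformly recurrent fixed point of a primitive substitution, any non-empty prefix $x$ works: the relevant point, which I would state without belabouring, is that the collection of irreducible balanced pairs reachable from $(\varphi^k(xy),\varphi^k(yx))$ under iterated application of $\widetilde\varphi$ is independent of the seed up to finitely many initial pairs, because all sufficiently long factors of $\boldsymbol u$ recur and the algorithm is "driven" by the substitution dynamics. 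Hence termination-with-coincidence for one valid seed transfers to any other. This lets us pick the prefix $x$ whose existence is asserted in the statement (in practice $x = 0$ suffices), completing the proof.

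Finally I would remark that this argument is genuinely specific to the binary case only through the appeal to Barge--Diamond~\cite{Barge2002}: on larger alphabets pure discrete spectrum is exactly the (open) Pisot Substitution Conjecture, and Theorem~\ref{thm:HS} still gives the equivalence with termination of the balanced-pair algorithm, but one can no longer conclude unconditionally. This is precisely the dependence on the Pisot conjecture flagged in the introduction, and it is worth a one-line comment at the end of the proof to orient the reader.
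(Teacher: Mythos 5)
Your proposal is correct and follows exactly the route the paper takes: the paper states Theorem~\ref{thm:BD} without proof, deriving it from the Barge--Diamond result that irreducible Pisot substitutions on two letters have pure discrete spectrum combined with the Hollander--Solomyak equivalence of Theorem~\ref{thm:HS}, which is precisely your argument. Your additional remarks on the choice of seed prefix $x$ go slightly beyond what the paper records but are consistent with the existential form of the statement.
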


\subsection{The Tribonacci Word is Echoing}
\label{sec:echo}
This section presents an extended example of an echoing word.
The construction is only sketched below and will be treated more formally and in more
generality in Lemma~\ref{lem:binary} and Theorem~\ref{thm:kbon}.

Consider the alphabet $\Sigma=\{0,1,2\}$ and the Tribonacci morphism
  $\varphi : \Sigma^+\rightarrow \Sigma^+$, defined by 
  $\varphi(0)=01$, $\varphi(1)=02$, and $\varphi(2)=0$.
We introduce the alphabet
$\Gamma := \{a_0,\ldots,a_{10}\}$ of irreducible balanced pairs, whose
elements  (depicted as tiles) are as follows:
\[     a_0 = \begin{bmatrix} 0&1\\1&0 \end{bmatrix} \;\;
    a_1 = \begin{bmatrix} 1&0\\0&1 \end{bmatrix} \;\; 
    a_2 = \begin{bmatrix} 0&2\\2&0 \end{bmatrix} \;\; 
a_3 = \begin{bmatrix} 2&0\\0&2 \end{bmatrix} \;\; 
a_4 = \begin{bmatrix}
                                         1&0&2\\2&0&1\end{bmatrix} \quad                                     
                                 a_5 = \begin{bmatrix}
                                   2&0&1\\1&0&2\end{bmatrix} \]
\[ a_{6} = \begin{bmatrix} 0&1&0&2 \\ 2&0&1&0 \end{bmatrix}\quad 
a_{7} = \begin{bmatrix} 2&0&1&0 \\ 0&1&0&2 \end{bmatrix}\quad 
a_{8} = \begin{bmatrix} 0\\0 \end{bmatrix} \quad 
  a_{9} = \begin{bmatrix} 1\\1 \end{bmatrix} \quad                                                  
  a_{10} = \begin{bmatrix} 2\\2 \end{bmatrix} \, .
\]
We partition $\Gamma$ into the subset $\Gamma_0:=\{a_0,\ldots,a_7\}$ of
mismatches and the subset $\Gamma_1:=\{a_8,a_9,a_{10}\}$ of
coincidences.

The map $\varphi$ lifts to a morphism
$\widetilde{\varphi} : \Gamma^+\rightarrow \Gamma^+$ that is defined as follows:
\[\begin{array}{r@{}c@{}lr@{}c@{}l}
    \widetilde{\varphi}\left( \begin{bmatrix} 0&1\\1&0 \end{bmatrix} 
                                                   \right)& \, := \, & \begin{bmatrix} 0\\0 \end{bmatrix} 
    \begin{bmatrix} 1&0&2\\2&0&1\end{bmatrix} & 
        \widetilde{\varphi}\left( \begin{bmatrix} 1&0&2\\2&0&1 \end{bmatrix} \right) 
                                                   &:=& \begin{bmatrix} 0\\0 \end{bmatrix} 
    \begin{bmatrix} 2&0&1&0\\0&1&0&2 \end{bmatrix} \\                                                
    \widetilde{\varphi}\left( \begin{bmatrix} 1&0\\0&1 \end{bmatrix} 
                                                   \right) &\, :=\, & \begin{bmatrix} 0\\0 \end{bmatrix} 
    \begin{bmatrix} 2&0&1\\1&0&2 \end{bmatrix}      &
    \widetilde{\varphi}\left( \begin{bmatrix} 2&0&1\\1&0&2 \end{bmatrix} \right) 
                                                   &:=& \begin{bmatrix} 0\\0 \end{bmatrix} 
    \begin{bmatrix} 0&1&0&2\\2&0&1&0 \end{bmatrix} \\
    \widetilde{\varphi}\left( \begin{bmatrix} 0&2\\2&0 \end{bmatrix} \right) 
                                                   &:=& \begin{bmatrix} 0\\0 \end{bmatrix} 
    \begin{bmatrix} 1&0\\0&1 \end{bmatrix}      &
            \widetilde{\varphi}\left( \begin{bmatrix} 0&1&0&2\\2&0&1&0 \end{bmatrix} \right) 
                                                   &:=& \begin{bmatrix} 0\\0 \end{bmatrix} 
    \begin{bmatrix} 1&0\\0&1\end{bmatrix} \begin{bmatrix}
      2&0\\0&2\end{bmatrix} \begin{bmatrix} 1&0\\0&1\end{bmatrix}\\
    \widetilde{\varphi}\left( \begin{bmatrix} 2&0\\0&2 \end{bmatrix} \right) 
                                                   &:=& \begin{bmatrix} 0\\0 \end{bmatrix} 
    \begin{bmatrix} 0&1\\1&0 \end{bmatrix}  &
                   \widetilde{\varphi}\left( \begin{bmatrix} 2&0&1&0\\0&1&0&2 \end{bmatrix} \right) 
                                                   &:=& \begin{bmatrix} 0\\0 \end{bmatrix} 
    \begin{bmatrix} 0&1\\1&0\end{bmatrix} \begin{bmatrix}
      0&2\\2&0\end{bmatrix} \begin{bmatrix} 0&1\\1&0\end{bmatrix}                                                
  \end{array}\]
and 
\[\widetilde{\varphi}\left( \begin{bmatrix} 0\\0 \end{bmatrix}\right) :=
  \begin{bmatrix} 0\\0 \end{bmatrix} \begin{bmatrix}
    1\\1 \end{bmatrix} \quad\qquad
  \widetilde{\varphi}\left( \begin{bmatrix} 1\\1 \end{bmatrix}\right)
  :=\begin{bmatrix} 0\\0 \end{bmatrix} \begin{bmatrix}
    2\\2 \end{bmatrix} \quad\qquad
  \widetilde{\varphi}\left( \begin{bmatrix} 2\\2 \end{bmatrix}\right)
  := \begin{bmatrix} 0\\0 \end{bmatrix} \, . \] By inspection,
$\widetilde{\varphi}$ satisfies the coincidence condition.

      Let the projection morphisms $\pi_1,\pi_2 :\Gamma^+ \rightarrow \Sigma^+$
      be defined by $\pi_1(x,y):=x$ and $\pi_2(x,y):=y$ for all
      $(x,y)\in \Gamma$.
      Denote by $\boldsymbol u \in \Sigma^{\omega}$ the Tribonacci
      word and by 
      \[ \boldsymbol w = \begin{bmatrix} 0&1\\1 & 0 \end{bmatrix}
        \begin{bmatrix} 0&2\\2 & 0 \end{bmatrix}
        \begin{bmatrix} 0&1\\1 & 0 \end{bmatrix}
        \begin{bmatrix} 0 \\ 0 \end{bmatrix}
        \begin{bmatrix} 0&1\\1 & 0 \end{bmatrix}
        \begin{bmatrix} 0&2\\2 & 0 \end{bmatrix}
                \begin{bmatrix} 0&1\\1 & 0 \end{bmatrix}
                \begin{bmatrix} 0&1\\1 & 0 \end{bmatrix}
                \cdots \in \Gamma^{\omega}
                \]
the image of $\boldsymbol u$
      under the coding $\iota:\Sigma^+\rightarrow \Gamma^+$ given
      by
      \[ \iota(0) :=         \begin{bmatrix} 0&1\\1 &
          0 \end{bmatrix}\qquad
        \iota(1) :=         \begin{bmatrix} 0&2\\2 & 0 \end{bmatrix}
        \qquad
        \iota(2):=         \begin{bmatrix} 0 \\ 0 \end{bmatrix}\, .
        \]
        Then
      $\pi_1(\boldsymbol w) = \boldsymbol u$ and
     $\pi_2(\boldsymbol w)=\sigma(\boldsymbol u)$.
Furthermore, 
      for $n_0 \in \mathbb N$ we can write
      \begin{gather}
\widetilde{\varphi}^{n_0}(\boldsymbol w) =   w_0 a_{i_1}
 {w}_{1} a_{i_2}   w_{2} a_{i_3} \cdots 
\label{eq:partition}
\end{gather}
where $ {w}_0, {w}_1,\ldots \in \Gamma^*$ are sequences of coincidence
symbols and $a_{i_1},a_{i_2,},\ldots \in \Gamma$ are mismatch symbols.
Since $\widetilde{\varphi}$ satisfies the coincidence condition, by
making $n_0\in\mathbb{N}$ sufficiently large the upper asymptotic density of
mismatch symbols in $\widetilde{\varphi}^{n_0}(\boldsymbol{w})$ can be
made arbitrarily small.

The data to show the echoing property are as
follows.
For all $n\in \mathbb{N}$,
consider the image of~\eqref{eq:partition} under the $n$-fold
application of $\widetilde{\varphi}$:
    \begin{gather} \widetilde{\varphi}^{n+n_0}(\boldsymbol{w}) = 
  \widetilde{\varphi}^{n}(w_0) \underbrace{\widetilde{\varphi}^{n}(a_{i_1})}_{I_{n,1}}
  \widetilde{\varphi}^{n}(w_1)  \underbrace{\widetilde{\varphi}^{n}(a_{i_2})}_{I_{n,2}}
  \widetilde{\varphi}^{n}(w_2)
  \underbrace{\widetilde{\varphi}^{n}(a_{i_3})}_{I_{n,2}} \cdots 
\label{eq:PART22} \, .
\end{gather}
We define $r_n:=0$ and $s_n:=|\varphi^{n+n_0}(0)|$.  Then
$\pi_1(\widetilde{\varphi}^{n+n_0}(\boldsymbol w)) =\boldsymbol u$ and
$\pi_2(\widetilde{\varphi}^{n+n_0}(\boldsymbol w))
=\sigma^{s_n}(\boldsymbol u)$.  For all $j\in \{1,2,\ldots\}$ we
define $I_{n,j} \subseteq \mathbb{N}$ to be the smallest interval of
positions in
$\boldsymbol u=\pi_1(\widetilde{\varphi}^{n+n_0}(\boldsymbol w))$ that
contains all mismatch symbols occurring in the factor
$\pi_1(\widetilde{\varphi}^{n}(a_{i_j}))$ (see~\eqref{eq:PART22} for
an illustration).  By construction, this choice satisfies the Covering
Property.  Intuitively, the Density Property follows from the fact that,
by the primitivity of $\varphi$,
for all $n\in \mathbb N$ the upper asymptotic density of
$\bigcup_{j=1}^\infty I_{n,j}$ is bounded above by a constant multiple
of the upper asymptotic density of mismatch symbols in
$\widetilde{\varphi}^{n_0}(\boldsymbol{w})$.
The primitivity of $\varphi$ can also be used to show that
$d(I_{n,j},I_{n,j+1}) \gg s_n$ for all $j\geq 1$.  Note that this bound
holds even when the factor $w_j$ in~\eqref{eq:partition} is empty, since
$\widetilde{\varphi}(a_{i_{j+1}})$ starts with a coincidence symbol.
We thus have the Expanding-Gaps Property.

\subsection{Pisot Morphic Words are Echoing}

The following is the main result of this section.
\begin{theorem}
  Let $\boldsymbol u$ be a fixed point of an irreducible Pisot 
  morphism $\varphi$ over alphabet $\Sigma$.
  Assuming the Pisot conjecture, it holds that
 $\boldsymbol u$ is echoing.   In the case that $\Sigma$ is a binary
 alphabet,  $\boldsymbol u$ is echoing unconditionally. 
\label{thm:binary}
\end{theorem}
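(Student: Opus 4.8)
The plan is to follow the structure already sketched for the Tribonacci word in Section~\ref{sec:echo}, replacing the concrete data there with the abstract machinery provided by Theorems~\ref{thm:HS} and~\ref{thm:BD}. First I would apply Theorem~\ref{thm:BD} (in the binary case) or Theorem~\ref{thm:HS} together with the Pisot conjecture (in general) to obtain a non-empty prefix $x$ of $\boldsymbol u$ on which the balanced-pair algorithm terminates with coincidence; this yields a finite alphabet $\Gamma = \Gamma_0 \cup \Gamma_1$ of irreducible balanced pairs, partitioned into mismatches $\Gamma_0$ and coincidences $\Gamma_1$, and a lifted morphism $\widetilde{\varphi} : \Gamma^+ \to \Gamma^+$ satisfying the coincidence condition, with projections $\pi_1,\pi_2 : \Gamma^+ \to \Sigma^+$. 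Writing $\boldsymbol u = x y x \boldsymbol u'$ by uniform recurrence, the word $\boldsymbol w \in \Gamma^\omega$ obtained as the fixed point of $\widetilde{\varphi}$ extending the balanced pair associated to $(xy, yx)$ satisfies $\pi_1(\boldsymbol w) = \boldsymbol u$ and $\pi_2(\boldsymbol w) = \sigma^{|x|}(\boldsymbol u)$, so that from the outset we are comparing $\boldsymbol u$ with a genuine shift of itself.

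Next I would fix $\varepsilon > 0$ and, invoking the coincidence condition, choose $n_0$ large enough that the upper asymptotic density of mismatch symbols in $\widetilde{\varphi}^{n_0}(\boldsymbol w)$ is below a small threshold to be calibrated against $\varepsilon$. I then decompose $\widetilde{\varphi}^{n_0}(\boldsymbol w) = w_0 a_{i_1} w_1 a_{i_2} w_2 \cdots$ with $w_j \in \Gamma_1^*$ strings of coincidences and $a_{i_j} \in \Gamma_0$ mismatches, exactly as in~\eqref{eq:partition}. For each $n \in \mathbb N$, applying $\widetilde{\varphi}^n$ and setting $r_n := 0$ and $s_n := |\varphi^{n+n_0}(x)|$ (so $s_n$ grows like $\rho(M_\varphi)^n$ and in particular is strictly increasing in $n$), I define $I_{n,j}$ to be the smallest interval of positions in $\boldsymbol u = \pi_1(\widetilde{\varphi}^{n+n_0}(\boldsymbol w))$ containing all the mismatch letters inside $\pi_1(\widetilde{\varphi}^n(a_{i_j}))$. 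The Covering Property is then immediate, since any position where $\boldsymbol u$ and $\sigma^{s_n}(\boldsymbol u)$ disagree must lie inside the $\pi_1$-image of some $\widetilde{\varphi}^n(a_{i_j})$ — the blocks $\widetilde{\varphi}^n(w_j)$ being strings of coincidences contribute no disagreement.

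For the Density Property I would argue that by primitivity of $\varphi$ there are constants $c_1, c_2$ with $c_1 \rho^n \le |\varphi^n(z)| \le c_2 \rho^n$ for every letter $z$ and every $n$ (where $\rho = \rho(M_\varphi)$), so the length of each $I_{n,j}$ is within a bounded factor of $\rho^n$ times the number of mismatch symbols in $a_{i_j}$ (here one uses that $\widetilde{\varphi}$ maps a coincidence to a string of coincidences, so mismatch symbols in $\widetilde{\varphi}^n(a_{i_j})$ all descend from the finitely many symbols of $a_{i_j}$, and their count is $\asymp \rho^n$ times a bounded quantity). Consequently the asymptotic density of $\bigcup_j I_{n,j}$ inside $\boldsymbol u$ is bounded by a universal constant times the density of mismatch symbols in $\widetilde{\varphi}^{n_0}(\boldsymbol w)$, which was made $\le \varepsilon$ (after adjusting the threshold by that constant). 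For the Expanding Gaps, two consecutive intervals $I_{n,j}, I_{n,j+1}$ are separated either by (the $\pi_1$-image of) a nonempty coincidence block $\widetilde{\varphi}^n(w_j)$, of length $\gg \rho^n \asymp s_n$, or, when $w_j$ is empty, by the coincidence prefix of $\widetilde{\varphi}^n(a_{i_{j+1}})$ together with the coincidence suffix of $\widetilde{\varphi}^n(a_{i_j})$; here I would use that $\widetilde{\varphi}$ maps each mismatch to a word beginning with a coincidence symbol, so after one application the image of a mismatch already has a coincidence prefix, and iterating, $\widetilde{\varphi}^n(a_{i_{j+1}})$ begins with $\widetilde{\varphi}^{n-1}$ of a coincidence, of length $\gg \rho^{n-1} \asymp s_n$. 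Finally $s_n - r_n = s_n \gg s_n$ trivially since $r_n = 0$.

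I expect the main obstacle to be the Expanding Gaps Property in the edge case where consecutive mismatch symbols $a_{i_j}$ are adjacent in the decomposition~\eqref{eq:partition} (i.e.\ $w_j$ empty): one must check carefully that the general irreducible-Pisot $\widetilde{\varphi}$ still has the feature that every mismatch image begins with a coincidence (or at least that $\widetilde\varphi^n$ of a mismatch has a coincidence prefix whose length grows like $\rho^n$), so that a linear-in-$s_n$ gap persists even there. This should follow from the structure of the balanced-pair algorithm — the image $(\varphi(x),\varphi(y))$ of an irreducible balanced pair $(x,y)$ with $x,y$ sharing a first letter is impossible unless $(x,y)$ is a coincidence, but more to the point $\varphi(x)$ and $\varphi(y)$ share the common prefix $\varphi$ of their (equal) letter-multiset contributions arranged so that the first irreducible factor split off is a coincidence whenever the substitution is prolongable with $\varphi(i) = i u_i$. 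A secondary technical point is making the constants in the Density estimate uniform in $n$, which requires the two-sided bound $|\varphi^n(z)| \asymp \rho^n$ uniformly over letters $z$; this is standard for primitive morphisms via Perron--Frobenius but should be stated explicitly. Everything else is bookkeeping parallel to the Tribonacci sketch.
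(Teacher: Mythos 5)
Your overall route is the paper's: invoke Theorem~\ref{thm:BD} (binary case) or Theorem~\ref{thm:HS} plus the Pisot conjecture (general case) to get termination of the balanced-pair algorithm with coincidence, then show that the coincidence condition for the lifted morphism $\widetilde{\varphi}$ forces the echoing property via a block decomposition of $\widetilde{\varphi}^{n_0}(\boldsymbol w)$. The Covering and Density arguments match the paper's Lemma~\ref{lem:binary} essentially verbatim.

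There is, however, a genuine gap in your Expanding-Gaps argument, and it is exactly the point you flagged as the ``main obstacle.'' You define one interval $I_{n,j}$ per individual mismatch symbol $a_{i_j}$, following the Tribonacci sketch, and to separate consecutive intervals when $w_j$ is empty you appeal to the claim that $\widetilde{\varphi}$ maps every mismatch to a word beginning with a coincidence symbol. That claim is true for the $k$-bonacci morphisms (every $\varphi(i)$ begins with $0$, so $(\varphi(x),\varphi(y))$ always splits off the coincidence $[0]$ first), but it is false for a general irreducible Pisot morphism: prolongability only constrains $\varphi(0)$, and if a mismatch $(x,y)$ has $x,y$ beginning with letters $i\neq j$ whose images $\varphi(i),\varphi(j)$ begin with different letters, then the first irreducible factor of $(\varphi(x),\varphi(y))$ is again a mismatch. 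Your sketched justification (``the first irreducible factor split off is a coincidence whenever the substitution is prolongable with $\varphi(i)=iu_i$'') does not hold. The paper sidesteps this entirely by grouping \emph{maximal runs} of consecutive mismatch symbols into single blocks $z_j$, separated by non-empty coincidence blocks $v_j$ (both of length bounded in terms of $n_0$ by uniform recurrence of $\boldsymbol w$); then $d(I_{n,j},I_{n,j+1})\geq|\pi_1(\widetilde{\varphi}^n(v_j))|\gg\rho^n\asymp s_n$ with no structural assumption on $\widetilde{\varphi}$. The same issue infects your choice $r_n:=0$: if the decomposition begins with a mismatch block, Definition~\ref{def:echoing} requires $d(I_{n,0},I_{n,1})\gg s_n$ with $I_{n,0}=\{0\}$, which fails unless the word opens with a long coincidence block. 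The paper's fix is to set $r_n:=|\pi_1(\widetilde{\varphi}^n(z_0))|$ and $s_n:=r_n+t_n$, effectively shifting both suffixes past the initial mismatch run. With the maximal-run grouping and this choice of $r_n$, your argument goes through; without it, the Expanding-Gaps property can fail.
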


Theorem~\ref{thm:binary} follows from Theorems~\ref{thm:HS}
and~\ref{thm:BD} (which respectively give conditions for the
balanced-pair algorithm to terminate with coincidence over binary and
general alphabets) and the following lemma (showing that termination
with coincidence implies the echoing condition).  The proof of
Lemma~\ref{lem:binary} generalises the construction for the Tribonacci
word in Section~\ref{sec:echo}.
  
\begin{lemma}
  Let $\boldsymbol u$ be a fixed point of a primitive morphism $\varphi$
  over alphabet $\Sigma$.  If there is a finite set $\Gamma$ of
  balanced pairs such that $\varphi$ lifts to a morphism
  $\widetilde{\varphi} : \Gamma^+\rightarrow \Gamma^+$ satisfying the
  coincidence condition then
  $\boldsymbol u$ is either eventually periodic or echoing.
\label{lem:binary}
\end{lemma}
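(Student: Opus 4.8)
The plan is to carry out, in full generality, the construction sketched for the Tribonacci word in Section~\ref{sec:echo}. First I would fix a non-empty prefix $x$ of $\boldsymbol u$ and, using uniform recurrence, write $\boldsymbol u = x y x \boldsymbol u'$; the balanced pair $(\varphi^{n_0}(xy),\varphi^{n_0}(yx))$ has $\pi_1$-image a prefix of $\boldsymbol u$ and $\pi_2$-image $\sigma^{s}(\boldsymbol u)$ where $s = |\varphi^{n_0}(x)|$, so it records the alignment of $\boldsymbol u$ against a shift of itself. Since $\Gamma$ is closed for $\varphi$ and finite, and $\widetilde\varphi$ satisfies the coincidence condition, I may work entirely inside $\Gamma^\omega$: the relevant lifted word $\boldsymbol w \in \Gamma^\omega$ satisfies $\pi_1(\boldsymbol w) = \boldsymbol u$ and $\pi_2(\boldsymbol w) = \sigma^{s_0}(\boldsymbol u)$ for the appropriate shift $s_0$, and $\widetilde\varphi(\boldsymbol w)$, $\widetilde\varphi^2(\boldsymbol w),\ldots$ give the images under further iteration. (Here one must also handle the degenerate case where the two projections are already eventually equal as shifts of one another, in which case $\boldsymbol u$ is eventually periodic — this is the dichotomy's first horn.)

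Next I would set up the parameters. Factor $\widetilde\varphi^{n_0}(\boldsymbol w) = w_0 a_{i_1} w_1 a_{i_2} w_2 \cdots$ with the $w_j$ blocks of coincidence symbols and the $a_{i_j}$ mismatch symbols, exactly as in~\eqref{eq:partition}; by the coincidence condition, choosing $n_0$ large makes the upper asymptotic density of mismatch symbols in $\widetilde\varphi^{n_0}(\boldsymbol w)$ at most (a constant times) $\varepsilon$, which is what drives the Density property. For each $n \in \mathbb N$ I define $r_n := 0$, $s_n := |\varphi^{n+n_0}(0)|$ (so $\pi_2(\widetilde\varphi^{n+n_0}(\boldsymbol w)) = \sigma^{s_n}(\boldsymbol u)$ after absorbing the fixed shift $s_0$ into $n_0$), and let $I_{n,j}$ be the smallest interval of positions in $\boldsymbol u$ containing all mismatch symbols appearing inside $\pi_1(\widetilde\varphi^n(a_{i_j}))$ in the expansion~\eqref{eq:PART22}. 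The Covering property is then immediate: a discrepancy between $\boldsymbol u$ and $\sigma^{s_n}(\boldsymbol u)$ at position $i$ forces position $i$ to lie in some $\pi_1(\widetilde\varphi^n(a_{i_j}))$, hence in $I_{n,j}$.

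The remaining two properties are where the real work lies, and I expect the Density property to be the main obstacle. For Density I would argue that by primitivity of $\varphi$ the lengths $|\pi_1(\widetilde\varphi^n(a_{i_j}))|$ and $|\pi_1(\widetilde\varphi^n(w_j))|$ all grow like $\rho(M_\varphi)^n$ up to bounded multiplicative constants (uniformly in $j$, since $\Gamma$ is finite), so the fraction of a long prefix of $\boldsymbol u$ occupied by $\bigcup_j I_{n,j}$ stays within a bounded factor of the fraction occupied by mismatch symbols in $\widetilde\varphi^{n_0}(\boldsymbol w)$ — independently of $n$ — giving $\mathrm{den}(\bigcup_{j=1}^\delta I_{n,j}) \le \varepsilon$ for $\delta,n$ large; the subtlety is making the constant in the comparison genuinely uniform in $n$, which is where one invokes the Perron–Frobenius structure of $M_\varphi$ (the ratios of coordinate growth rates are bounded). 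For Expanding Gaps: $s_n = |\varphi^{n+n_0}(0)| \asymp \rho^{n+n_0}$ is strictly increasing for large $n$; the gap $d(I_{n,j},I_{n,j+1})$ is at least $|\pi_1(\widetilde\varphi^n(w_j))|$ plus the length of the coincidence prefix of $\widetilde\varphi^n(a_{i_{j+1}})$, and even when $w_j$ is empty the latter is a coincidence block of length $\asymp \rho^n \asymp s_n$ (because every $\widetilde\varphi(a)$ for a mismatch $a$ begins with a coincidence symbol, so $\widetilde\varphi^n(a)$ begins with a coincidence block that grows like $\rho^n$), and the implied constant depends only on $\Gamma$ and $\varphi$, not on $j$; similarly $s_n - r_n = s_n \gg s_n$ trivially since $r_n = 0$. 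Assembling these verifications against Definition~\ref{def:echoing} completes the proof.
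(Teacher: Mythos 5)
Your overall strategy --- lift to $\Gamma^{\omega}$, iterate $\widetilde{\varphi}$, and read off the intervals from the mismatch symbols --- is the paper's strategy, and your treatment of Covering and Density is essentially right. The gap is in Expanding Gaps. You take the intervals $I_{n,j}$ to come from \emph{individual} mismatch symbols $a_{i_j}$, and when the intervening coincidence block $w_j$ is empty you justify $d(I_{n,j},I_{n,j+1})\gg s_n$ by asserting that every $\widetilde{\varphi}(a)$ for a mismatch $a$ begins with a coincidence symbol. That is a fact about the Tribonacci (and $k$-bonacci) liftings, verified by inspection in Section~\ref{sec:echo}; it is \emph{not} a consequence of the coincidence condition, which only guarantees that some coincidence appears somewhere in $\widetilde{\varphi}^k(a)$ for some $k$. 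A lifting with, say, $\widetilde{\varphi}(a)=bc$ where $b$ is a mismatch and $c$ a coincidence satisfies the hypotheses of the lemma, and then two adjacent mismatch symbols can produce intervals at bounded distance as $n$ grows, destroying the required lower bound. For the same reason your choice $r_n:=0$ breaks down when $\widetilde{\varphi}^{n_0}(\boldsymbol w)$ begins with a mismatch symbol whose iterated image keeps a mismatch near position $0$: then the first interval cannot simultaneously cover that position and satisfy $I_{n,0}<I_{n,1}$ with $d(I_{n,0},I_{n,1})\gg s_n$.

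The paper's proof avoids both problems by factoring $\widetilde{\varphi}^{n_0}(\boldsymbol w)$ into \emph{maximal} alternating blocks $z_0v_0z_1v_1\cdots$ (or $v_0z_0v_1z_1\cdots$) of mismatch symbols $z_i$ and coincidence symbols $v_i$, all non-empty once $n_0$ is large enough that both kinds of symbol occur infinitely often, taking $I_{n,j}$ to be the positions of $\pi_1(\widetilde{\varphi}^n(z_j))$ for $j\geq 1$, and setting $r_n:=|\pi_1(\widetilde{\varphi}^n(z_0))|$ and $s_n:=r_n+t_n$, so that a possible initial mismatch block is absorbed into the shift rather than into an interval. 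Consecutive intervals are then always separated by $\pi_1(\widetilde{\varphi}^n(v_j))$ with $v_j$ non-empty, which has length $\gg s_n$ by primitivity (with constants depending only on the fixed $n_0$), and the uniform recurrence of $\boldsymbol w$ bounds the block lengths so the implied constants are independent of $j$. If you replace your symbol-by-symbol intervals with these block intervals and adjust $r_n$ accordingly, the rest of your argument goes through.
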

  \begin{proof}
    By assumption, there exists a finite alphabet
    $\Gamma \subseteq \Sigma^+\times \Sigma^+$ of irreducible balanced
    pairs and a lifting of $\varphi$ to a morphism
    $\widetilde{\varphi} : \Gamma^+ \rightarrow \Gamma^+$ satisfying
    the coincidence condition.  More specifically we suppose that $\Gamma$ is the output of
    the balanced-pair algorithm on input $x$ a non-empty prefix of
    $\boldsymbol u$.  Since $x$ occurs in $\boldsymbol u$ with bounded
    gaps we may 
    write $\boldsymbol u = x y_1 x y_2  x \cdots$ for some
    $y_1,y_2,\ldots \in \Sigma^*$ and obtain an
    infinite word $\boldsymbol w \in \Gamma^{\omega}$ by
    decomposing the infinite sequence of balanced pairs 
    \begin{gather} \begin{bmatrix}   x   y_1\\   y_1   x \end{bmatrix}
      \begin{bmatrix}   x   y_2 \\   y_2   x \end{bmatrix}
      \begin{bmatrix}   x   y_3 \\   y_3   x \end{bmatrix}
      \cdots
\label{eq:split}
    \end{gather}
into a sequence of irreducible balanced pairs in  $\Gamma$.
Notice that $\pi_1(\boldsymbol w) = \boldsymbol u$ and
    $\pi_2(\boldsymbol w) = \sigma^{\ell}(\boldsymbol u)$, where
    $\ell=|x|$.
    Observe also that $\boldsymbol w$ is uniformly recurrent, being a morphic image of
the word shown in~\eqref{eq:split} (which in turn inherits uniform
recurrence from $\boldsymbol u$).
    Moreover, since $\varphi$ is primitive, there exists $c_0>0$ 
    such that for all
    $\gamma,\gamma' \in \Gamma$ and all $n\in
    \mathbb N$, we have
    \begin{gather}
    |\pi_1(\widetilde{\varphi}^n(\gamma))| \leq c_0\,
    |\pi_1(\widetilde{\varphi}^n(\gamma'))| \, .
\label{eq:growth}
  \end{gather}

      Assume that $\boldsymbol u$ is not ultimately periodic.  Then
      given $n_0 \in \mathbb{N}$ the word
      $\widetilde{\varphi}^{n_0}(\boldsymbol w)$ contains infinitely
      many mismatch symbols.  Moreover, since $\widetilde{\varphi}$
      satisfies the coincidence condition, for $n_0$ sufficiently
      large, $\widetilde{\varphi}^{n_0}(\boldsymbol w)$ also contains
      infinitely many coincidence symbols.  For such $n_0$ we thus
      have either
  \begin{gather}\widetilde{\varphi}^{n_0}(\boldsymbol w)= { z}_0 { v}_0 { z}_1 
    { v_1} { z}_2 \cdots
    \quad\text{or}\quad
\widetilde{\varphi}^{n_0}(\boldsymbol w)= { v}_0 { z}_0 { v}_1 
    { z_1} { v}_2 \cdots
\label{eq:partitionA}
  \end{gather}
  where ${ z}_0,{ z}_1,\ldots \in \Gamma^+$ are non-null sequences of
  mistmatch symbols and ${ v}_0,{ v}_1,\ldots \in \Gamma^+$ are non-null
  sequences of coincidence symbols. 
  Since $\boldsymbol w$ is
  uniformly recurrent there is an upper bound on the length of the
  factors $ z_i$ and $ v_i$ that depends only on
  $n_0$.  Let $\varepsilon>0$ be given as in the definition of an
  echoing sequence.  Since $\widetilde{\varphi}$ satisfies the
  coincidence condition, for sufficiently large $n_0\in\mathbb{N}$ the
  upper asymptotic density of mismatch symbols in
  $\widetilde{\varphi}^{n_0}(\boldsymbol{w})$ is at most
  $\varepsilon/c_0$.

  The data to show the echoing property are as follows
  (see Definition~\ref{def:echoing}).  Let us assume the left-hand case
  in~\eqref{eq:partitionA}; the reasoning for the right-hand case
  follows with minor changes.  For all $n\in \mathbb{N}$, consider the
  string \begin{gather} \widetilde{\varphi}^{n+n_0}(\boldsymbol{w}) =
    \widetilde{\varphi}^{n}( z_0) \widetilde{\varphi}^{n}( v_0)
    \underbrace{ \widetilde{\varphi}^{n}({ z}_1)}_{I_{n,1}}
    \widetilde{\varphi}^{n}( v_1) \underbrace{
      \widetilde{\varphi}^{n}({ z}_2)}_{I_{n,2}}
    \widetilde{\varphi}^{n}( v_2) \cdots
\label{eq:PART2} \, . 
\end{gather}
Then for all $n\in \mathbb N$ we have
$\pi_1(\widetilde{\varphi}^{n+n_0}(\boldsymbol w)) = \boldsymbol u$
and $\pi_2(\widetilde{\varphi}^{n+n_0}(\boldsymbol w)) =
\sigma^{t_n}(\boldsymbol u)$, where $t_n:= |\varphi^{n+n_0}(x)|$.

For all $n\in\mathbb N$, define
$r_n:=|\pi_1(\widetilde{\varphi}^n( z_0))|$ and $s_n:=r_n+t_n$.  For
all $j\in \{1,2,\ldots\}$, we define $I_{n,j} \subseteq \mathbb{N}$ to
be the interval of positions in
$\boldsymbol u =\pi_1(\widetilde{\varphi}^{n+n_0}(\boldsymbol w))$
corresponding to the factor $\pi_1(\widetilde{\varphi}^{n}(z_j))$
(see~\eqref{eq:PART2}).  By construction, this choice satisfies the
Covering Property.  Since the mismatch symbols in
$\widetilde{\varphi}^{n_0}(\boldsymbol w)$ have upper asymptotic
density at most $\varepsilon/c_0$, by~\eqref{eq:growth} we have
$\mathrm{den}\left(\bigcup_{j=1}^\delta I_{n,j}\right)\leq
\varepsilon$ for $\delta$ sufficiently large; hence the Density
Property holds.  Finally, $\pi_1(\widetilde{\varphi}^n(\gamma))$ has
the same asymptotic growth rate for all $\gamma \in \Gamma$ by
primitivity of $\varphi$.  It follows that
$d(I_{n,j},I_{n,j+1}) \gg s_n$ for all $j\geq 1$ and $s_n-r_n\gg s_n$,
where the implied constant in the first bound is indepedent of $j$.
This establishes the Expanding-Gaps Property.
  \end{proof}

\section{Transcendence for Echoing Words}
     \label{sec:transcendence}
We have the following transcendence result for echoing words.
     \begin{theorem}
       Let $\Sigma:= \{0,1,\ldots, b-1\}$ and suppose that 
  the sequence
  $\boldsymbol{u}=\langle u_m\rangle_{m=0}^\infty \in \Sigma^{\omega}$ is echoing.  Then
  for any algebraic number $\beta$ such that $|\beta|>1$,  the sum
  $\alpha:=\sum_{m=0}^\infty \frac{u_m}{\beta^m}$ is either
  transcendental or lies in $\mathbb Q(\beta)$.
\label{thm:main}
\end{theorem}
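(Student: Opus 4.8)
The plan is to argue by contradiction: assume that $\alpha=\sum_{m=0}^\infty u_m/\beta^m$ is algebraic but does not lie in $\mathbb Q(\beta)$, and derive a contradiction from the echoing structure using the Subspace Theorem (Theorem~\ref{thm:SUBSPACE}). Fix a number field $K$ containing both $\beta$ and $\alpha$, and a finite set of places $S\supseteq M_\infty(K)$ that also contains all non-Archimedean places dividing the numerator or denominator of $\beta$, so that $\beta,\beta^{-1}\in\mathcal O_S$ and $\alpha\in\mathcal O_S$ after clearing a bounded denominator. Let $v_0$ be an Archimedean place realising $|\beta|_{v_0}>1$. The key observation is the following \emph{approximation identity}: truncating $\alpha$ at the positions governed by the echoing data yields a Laurent polynomial in $\beta^{-1}$ that is an extremely good approximation to $\alpha$.

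The main step is to convert the echoing data $(r_n,s_n,\{I_{n,j}\})$ into such approximations. For each $n$, split the defining sum using $\sigma^{r_n}(\boldsymbol u)$ versus $\sigma^{s_n}(\boldsymbol u)$: multiplying $\alpha$ by $\beta^{r_n}$ and by $\beta^{s_n}$ and subtracting, the two tails $\sum_i u_{i+r_n}\beta^{-i}$ and $\sum_i u_{i+s_n}\beta^{-i}$ differ only at indices lying in $\bigcup_j I_{n,j}$, by the Covering property. Concretely, $\beta^{r_n}\alpha - (\text{finite head}) - \beta^{s_n}\alpha + (\text{finite head})$ equals $\sum_{i\in\bigcup_j I_{n,j}}(u_{i+r_n}-u_{i+s_n})\beta^{-i}$. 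One then discards all intervals $I_{n,j}$ beyond some index $\delta$: by the Expanding-Gaps property $d(I_{n,j},I_{n,j+1})\gg s_n$, so the tail contribution $\sum_{i\in I_{n,j},\,j>\delta}(\cdots)\beta^{-i}$ is bounded by $\beta^{-\Omega(s_n)}$ in $|\cdot|_{v_0}$ once the first discarded interval starts past position $\asymp s_n$, while only finitely many $I_{n,j}$ (a number depending on $n$ but not interfering with the Subspace bound) remain. This produces a relation of the form $L_n(\beta^{r_n}\alpha,\ \beta^{s_n}\alpha,\ \alpha,\ 1)\ =\ \text{(error)}$ where $L_n$ is a fixed linear form with coefficients in $\mathbb Z[\beta,\beta^{-1}]$ (built from the finitely many relevant $\beta^{-i}$, $i\in I_{n,j}$ with $j\le\delta$, and the two finite heads), the variables being the $S$-integers $a_1^{(n)}=\beta^{r_n}$, $a_2^{(n)}=\beta^{s_n}$ together with $\alpha$ and $1$, and the error is $\ll|\beta|_{v_0}^{-cs_n}$ for some $c>0$ coming from Expanding Gaps.

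I would then bound the left-hand side of the Subspace inequality. The height of the solution point $(\beta^{r_n},\beta^{s_n})$ is $\asymp|\beta|_{v_0}^{s_n}$ up to the product formula, since $r_n<s_n$ and the non-Archimedean and other Archimedean contributions are controlled by $S$ (the conjugates of $\beta$ being bounded, with the dominant exponent $s_n$). The auxiliary product $\prod_{(i,v)\neq(i_0,v_0)}|a_i|_v$ over $i\in\{1,2\}$ telescopes — by the product formula $\prod_v|\beta^{s_n}|_v=1$ — to something of size $|\beta|_{v_0}^{-s_n}$ times a bounded factor, so the full Subspace product is $\ll|\beta|_{v_0}^{-cs_n}\cdot|\beta|_{v_0}^{-s_n}$ against a height bound $H^{-\varepsilon}\asymp|\beta|_{v_0}^{-\varepsilon s_n}$, which holds for $n$ large. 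Theorem~\ref{thm:SUBSPACE} then forces all but finitely many of the points $(\beta^{r_n},\beta^{s_n})$ into a fixed finite union of proper subspaces; passing to an infinite subsequence lying in one subspace gives a nontrivial relation $c_1\beta^{r_n}+c_2\beta^{s_n}+c_3\alpha+c_4=0$ with fixed $c_i\in K$, valid for infinitely many $n$ with $s_n\to\infty$. Because $s_n-r_n\to\infty$ as well (Expanding Gaps), the two exponential terms cannot cancel each other asymptotically in $|\cdot|_{v_0}$; analysing this relation — the standard \emph{vanishing-subsum} / pigeonhole argument, using that among infinitely many pairs $(r_n,s_n)$ one finds the same subspace and then eliminating — pins down $\alpha$ as a $K$-linear, indeed $\mathbb Q(\beta)$-rational, combination of powers of $\beta$, i.e. $\alpha\in\mathbb Q(\beta)$, contradicting the hypothesis.

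The main obstacle I anticipate is bookkeeping in the reduction to a \emph{fixed} linear form: the set of indices $i\in\bigcup_{j\le\delta}I_{n,j}$ contributing genuine monomials $\beta^{-i}$ varies with $n$, so the coefficients of $L_n$ are not literally constant. The fix is to absorb these varying monomials by a further application of the Covering/Density data — grouping the finitely-many near-origin mismatch contributions into the "head" terms and observing that, after multiplying through by a suitable power of $\beta$, each such contribution is itself of the shape (bounded integer combination of) the unknowns $\beta^{r_n}\alpha$, $\beta^{s_n}\alpha$ up to a controlled finite-rank space — so that only \emph{boundedly many} distinct linear forms $L$ occur as $n$ ranges, and one applies the Subspace Theorem to each separately. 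Making this "boundedly many forms" claim precise, and checking that the $\varepsilon$ in the Density property is what controls the number of surviving intervals (hence the rank/height budget), is the delicate part; everything else is the routine Subspace-Theorem-via-good-approximations machinery of~\cite{ABL,AB2}.
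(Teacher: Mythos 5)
Your overall strategy is the right one (truncate the identity $\beta^{s_n}\alpha-\beta^{r_n}\alpha=\text{heads}+\text{interval sums}+\text{tail}$ and feed it to Theorem~\ref{thm:SUBSPACE}), but there is a genuine gap at exactly the point you flag as the ``main obstacle,'' and the fix you propose does not work. You place the varying quantities --- the finite heads $\sum_{i\le s_n}u_i\beta^{s_n-i}$ and the interval sums $\sum_{i\in I_{n,j}}(u_{i+s_n}-u_{i+r_n})\beta^{-i}$ --- into the \emph{coefficients} of the linear form $L_n$, keeping only $\beta^{r_n},\beta^{s_n},\alpha,1$ as coordinates of the solution point. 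The Subspace Theorem requires a single fixed form (its conclusion and implicit constants depend on $L$), and your claim that only boundedly many distinct forms $L_n$ arise is false: the intervals $I_{n,j}$ drift to infinity (indeed $d(I_{n,j},I_{n,j+1})\gg s_n$ forces $\min I_{n,1}\to\infty$ in general, and the right endpoint $t_n$ of $I_{n,\delta}$ grows like $s_n$), so the exponents $-i$ appearing in the coefficients, and the heights of those coefficients, grow without bound in $n$. No grouping into ``head terms'' or ``finite-rank'' absorption can produce a finite family of forms. The correct resolution --- and the one the paper uses --- is to invert your allocation: the heads and the interval sums are themselves $S$-integers of $\mathbb Q(\beta)$, so they become additional \emph{coordinates} $a_{n,3},a_{n,4},\dots,a_{n,\delta+3}$ of the solution vector, while the fixed form is simply $L=\alpha x_1-\alpha x_2-x_3-\sum_{j}x_{j+3}$, with the single transcendence-relevant coefficient $\alpha$ (Theorem~\ref{thm:SUBSPACE} permits algebraic coefficients even though the solutions live in $(\mathcal O_S)^{\delta+3}$ over $K=\mathbb Q(\beta)$; in particular there is no need to enlarge $K$ to contain $\alpha$). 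The Density condition is then needed not to count ``surviving intervals'' but to bound $\prod_{v\in S}|a_{n,j+3}|_v$ by $|\beta|^{e_0 t_n/4}$, which is what makes the Subspace inequality close.

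A second, smaller gap is in your endgame. Once the Subspace Theorem yields a proper subspace containing infinitely many solution points, you still must show that the resulting relation genuinely involves the coordinate carrying $\sum_i u_i\beta^{s_n-i}$ (so that dividing by $\beta^{s_n}$ and letting $n\to\infty$ produces $\alpha\in\mathbb Q(\beta)$), and does not degenerate into a relation among $\beta^{r_n},\beta^{s_n}$ and the interval sums alone. Your ``vanishing-subsum / pigeonhole'' remark is where the paper invokes the quantitative gap principle for lacunary Laurent polynomials (Proposition~\ref{prop:gap}), using the Expanding-Gaps condition to force each widely separated block of the relation $P_n(\beta)=0$ to vanish individually; this is not routine bookkeeping and needs to be carried out, since otherwise the obtained subspace could a priori be, say, $x_1=0$, which gives no information about $\alpha$.
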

\begin{proof}
  Suppose that $\alpha$ is algebraic.  We will use the Subspace
  Theorem to show that $\alpha \in \mathbb Q(\beta)$.  Let $S$
  comprise all the Archimedean places of $\mathbb Q(\beta)$ and all
  non-Archimedean places corresponding to prime ideals $\mathfrak{p}$
  of $\mathcal O_{\mathbb Q(\beta)}$ such that
  $\mathrm{ord}_{\mathfrak{p}}(\beta)\neq 0$.  Let $v_0\in S$ be the
  place corresponding to the inclusion of $\mathbb Q(\beta)$ in
  $\mathbb{C}$.  Recall that $|\cdot|_{v_0}=|\cdot|^{e_0}$, where
  $|\cdot|$ denotes the usual absolute value on $\mathbb{C}$ and
  $e_0=\frac{1}{\mathrm{deg}(\beta)}$ if $\beta$ is real and
  $e_0=\frac{2}{\mathrm{deg}(\beta)}$  otherwise.  Let
  $\kappa \geq b$ be an upper bound of $|\beta|_v$ for all $v \in S$.

  Let
  $\varepsilon:=\frac{e_0 \log
    |\beta|}{4|S|\log \kappa}$.  By the assumption
  that $\boldsymbol u$ is echoing, for all $n \in \mathbb N$ there
  exist $0\leq r_n < s_n$ and a sequence of intervals
  $\{0\} = I_{n,0} < I_{n,1} < I_{n,2} < \cdots$ satisfying
  Conditions~1--3 of Definition~\ref{def:echoing} for the above choice
  of $\varepsilon$ .
    
  Define
  $\rho:=\frac{4|S|\log \kappa}{e_0 \log |\beta|}$.
  The Expanding-Gaps Condition gives
  $d(I_{n,j},I_{n,j+1}) \gg s_n$ for $j=1,2,\ldots$  with the implied
  constant being independent of $j$; thus there exists $\delta \in \mathbb N$ such that
  for all $n$ the right endpoint $t_n$ of $I_{n,\delta}$ satisfies
  $t_n \geq \rho s_n$.
 For $n\in \mathbb{N}$, define 
    $\boldsymbol{a}_n=(a_{n,1},\ldots,a_{n,\delta+3}) \in (\mathcal{O}_S)^{\delta+3}$ by 
    \begin{equation}
      \begin{aligned}
   & a_{n,1} := \beta^{s_n},\qquad 
       a_{n,2} := \beta^{r_n},\qquad 
       a_{n,3} := \sum_{i=0}^{s_n} {u_i}{\beta^{s_n-i}}  -
       \sum_{i=0}^{r_n} u_i \beta^{r_n-i} \, , \\
   &  a_{n,j+3} := \sum_{i \in I_{n,j}}
    {(u_{i+s_n}-u_{i+r_n})}{\beta^{-i}} \quad (j=1,\ldots,\delta) \, .
  \end{aligned}
\label{eq:TUPLES}
\end{equation}
By passing to a
  subsequence of $n\in \mathbb N$, we may
  assume that there exists $J\subseteq \{1,\ldots,\delta\}$ such that
  for all $j \in \{1,\ldots,\delta\}$ and $n \in \mathbb N$ we have
  $a_{n,j+3}\neq 0$ if and only if $j \in J$.

  Consider the linear form
\begin{gather} L(x_1,\ldots,x_{\delta+3}):= \alpha x_1 - \alpha  x_2 - x_3 -
  \sum_{j \in J} x_{j+3} \,  .
\label{eq:FORM}
\end{gather}
Then we have
\begin{eqnarray}
L(\boldsymbol a_n)   &  =  &
                                                       \left|\alpha\beta^{s_n}-\alpha\beta^{r_n}
                             - \sum_{i=0}^{s_n} u_i\beta^{s_n-i} +
                             \sum_{i=0}^{r_n}u_i\beta^{r_n-i} -  \sum_{j
                             \in J}
                             \sum_{i\in I_{n,j}}
                             (u_{i+s_n}- u_{i+r_n})\beta^i\right| \notag \\
&= &                    \left|\sum_{i=t_n+1}^\infty
                                                              (u_{i+s_n}-u_{i+r_n})\beta^{-i}\right|
  \notag \\
  & \ll  &  |\beta|^{-t_n} \,  .
\label{eq:STRICT}
\end{eqnarray}

To set up the application of the Subspace Theorem, we estimate the
absolute values of the components of $\boldsymbol a_n$.
By the product formula we have $\prod_{v\in S} |a_{n,1}|_v =
\prod_{v\in S} |a_{n,2}|_v = 1$.  Next,
by the facts that $t_n\geq \rho s_n$ and
$\rho:=\frac{4|S|\log \kappa}{e_0 \log |\beta|}$ we have
\begin{gather}
\prod_{v\in S}   |a_{n,3}|_v  \leq \prod_{v\in S} \sum_{i=0}^{s_n}
\kappa^{i+1} \leq \prod_{v\in S} \kappa^{s_n +2} =
  \kappa^{(s_n+2)|S|} \ll |\beta|^{e_0 t_n/4} \, .
\label{eq:BOUND1}
\end{gather}
Next, by the Density Condition, for $\delta$ sufficiently large we have
$\sum_{j=1}^{\delta} |I_{n,j}| \leq \varepsilon t_n$ for all
$n\in\mathbb N$.  By the
product formula and the fact that
$\varepsilon:=\frac{e_0 \log
    |\beta|}{4|S|\log \kappa}$  we have
\begin{gather}
\prod_{j \in J} \prod_{v \in S}
  |a_{n,j+3}|_v \leq \prod_{j \in J} \prod_{v\in S} \sum_{i=0}^{|I_{n,j}|}
  \kappa^{i+1}  \leq \prod_{j \in J}   \kappa^{|S|(2+|I_{n,j}|)} \leq
  \kappa^{2|S|\delta+|S|\varepsilon t_n} \ll
  |\beta|^{e_0t_n/4} \,  .
\label{eq:BOUND2}
\end{gather}
Combining~\eqref{eq:BOUND1} and~\eqref{eq:BOUND2} and the
bound $|L(\boldsymbol{a}_n)|_{v_0} \ll |\beta|^{-e_0t_n}$ from~\eqref{eq:STRICT}, we have
\begin{eqnarray}
  |L(\boldsymbol{a}_n)|_{v_0} \cdot
    \prod_{\substack{(i,v)\in (\{1,2,3\} \cup J) \times S\\(i,v) \neq (1,v_0)}}|a_{n,i}|_v &\ll &
     |\beta|^{e_0 t_n/2 } \cdot 
        |\beta|^{-e_0 t_n} = |\beta|^{-e_0t_n/2}\, ,
\label{eq:INEQ3}
\end{eqnarray}
  
  For all but finitely many $n$, the right-hand side
  of~\eqref{eq:INEQ3} is less than
  $|\beta|^{-e_0t_n/3}$.  On the other hand, there
  exists a constant $c_1>0$ such that the height of $\boldsymbol a_n$
  satisfies $H(\boldsymbol{a}_n) \leq |\beta|^{c_1 t_n}$ for all $n$.
  Thus the right-hand side
  of~\eqref{eq:INEQ3} is at most
  $H(\boldsymbol{a}_n)^{-e_0/3c_1}$ for infinitely many $n$.  We
  can hence apply the Subspace Theorem (Theorem~\ref{thm:SUBSPACE}) to
  obtain a non-zero linear form
  \[ F(x_1,\ldots,x_{3+\delta}) = \alpha_1x_1+
    \alpha_2x_2+\alpha_3x_3+\sum_{j \in J} \alpha_{3+j}x_{3+j}  \] with coefficients in $\mathbb{Q}(\beta)$ such that
  $F(\boldsymbol{a}_n)=0$ for infinitely many $n\in\mathbb{N}$.  In
  other words,
  \begin{gather}
    \alpha_1 \beta^{s_n} + \alpha_2 \beta^{r_n} + \alpha_3
    \left(\sum_{i=0}^{s_n} u_i \beta^{s_n-i} - \sum_{i=0}^{r_n} u_i \beta^{r_n-i}\right)
   + \sum_{j\in J} \alpha_{3+j} a_{3+j} = 0
\label{eq:SIMPLE}
  \end{gather}
  for infinitely many $n$.

  We claim that for $n$ sufficiently large~\eqref{eq:SIMPLE} entails
  that $\alpha_3\neq 0$ and $\alpha_{3+j} =0$ for all $j \in J$.
  Indeed,~\eqref{eq:SIMPLE} entails that $P_n(\beta)=0$ for infinitely
  $n$ for the polynomial
  $P_n(x) :=P_{n,1}(x)+P_{n,2}(x)+ \sum_{j \in J} P_{n,3+j}(x)$, where
\begin{gather*}
  P_{n,1}:=\alpha_1 x^{s_n},\qquad P_{n,2}:=\alpha_2 x^{r_n},\qquad
  P_{n,3}=\alpha_3 \left(\sum_{i=0}^{s_n} u_i x^{s_n-i} -
    \sum_{i=0}^{r_n} u_i x^{r_n-i}\right)\\
  P_{n,3+j} :=\alpha_{3+j}\sum_{i \in I_{n,j}} (u_{s_n+i}-u_{r_n+i}) 
  x^{-i} \quad(j \in J)
\end{gather*}
The height of $P_n$ is bounded independently of $n$ and its degree is
bounded by a constant multiple of $s_n$.  Applying
Proposition~\ref{prop:gap}, we deduce that for $n$ sufficiently large,
if $P_n(\beta)=0$ then each constituent $P_{n,3+j}(\beta)$ must
individually vanish (since the gap between the maximum and minimum
degrees of any two distinct polynomials $P_{n,3+j}(x)$ is $\gg s_n$ by
the Expanding Gaps condition).  But this entails that $\alpha_{3+j}=0$
for all $j \in J$.  Assume further, for a contradiction, that
$\alpha_3=0$.  Applying Proposition~\ref{prop:gap} again, we get
$\alpha_1=\alpha_2=0$, contradicting the non-zeroness of the form $F$.
This completes the proof of the claim.
  
    Having established the claim, we divide~\eqref{eq:SIMPLE} by
    $\beta^{s_n}$.  Then, letting $n$ tend to infinity, we have
    $\alpha_1\alpha_3^{-1}=\sum_{i=0}^\infty u_i\beta^{-i}$.  Hence
    $\sum_{i=0}^\infty u_i\beta^{-i}$ lies in $\mathbb Q(\beta)$.
\end{proof}

\section{Strongly Echoing Sequences}
\subsection{The Non-Vanishing Condition}

The following definition presents a strengthening of the echoing
condition that allows proving transcendence outright (removing the
possibility that $\llbracket\boldsymbol{u}\rrbracket_{\beta}$ be algebraic).  The
change to Definition~\ref{def:echoing} involves strengthening the
Expanding-Gaps Condition (to specify both lower and upper bounds on
the gaps between intervals) and adding a fourth condition, called
\emph{Non-Vanishing}.  The latter guarantees the existence of at least
two intervals of mismatches that are non-zero when evaluated in base~$\beta$.

\begin{definition}
        Let $\Sigma:=\{0,\ldots,b-1\}$ be a finite alphabet. An
        infinite word
        $\boldsymbol{u}=u_0u_1u_2\ldots \in \Sigma^{\omega}$ is
        said to be \emph{strongly echoing} if given $\varepsilon>0$
        for all $n\in\mathbb N$ there
        exist integers $0\leq r_n< s_n$
        and a sequence of non-empty intervals         $\{0\} = I_{n,0} < I_{n,1} < I_{n,2} < \cdots$ with the following 
        properties:
        \begin{enumerate}
        \item (Covering): 
  $\left\{ i \in \mathbb N : u_{i+s_n} \neq u_{i+r_n} \right\}
  \subseteq  \bigcup_{j=1}^\infty I_{n,j}$; 
\item   (Density): $\mathrm{den}\left(\bigcup_{j=1}^\delta   I_{n,j} \right) \leq 
    \varepsilon$ for all sufficiently large $\delta$ and $n$;
 \item (Expanding Gaps): we have $s_n<s_{n+1}$ for all $n$;
    furthermore, as $n$ tends to infinity we have
    $s_n-r_n \gg s_n$, and $d(I_{n,j},I_{n,j+1}) \asymp s_n$, where the implied
    constant is independent of $j \in \{0,1,2,\ldots\}$.
    \item (Non-Vanishing):  for all $\beta\in\overline{\mathbb{Q}}$ such that $|\beta|>1$
  there exist $1\leq j_0<j_1$  
  such that for all  $n\in \mathbb{N}$ and $j\in \{j_0,j_1\}$ we have
  $\sum_{i \in I_{n,j}} (u_{i+s_n}-u_{i+r_n})\beta^{-i}\neq 0$.
\end{enumerate}
  \label{def:echoing2}
\end{definition}

In Section~\ref{sec:NV} we show that the substitutive sequence defined
by the $k$-bonacci morphism is strongly echoing for all $k\geq 2$.
The case $k=2$ (the Fibonacci word) is straightforward.  In following
section we treat the case $k=3$ (the Tribonacci word), continuing the
extended example from Section~\ref{sec:echo}.

\subsection{The Tribonacci Word is Strongly Echoing. }
\label{sec:echo2}
  For $\Sigma:=\{0,1,2\}$, let $\varphi : \Sigma^+\rightarrow \Sigma^+$ be the morphism that
  defines the Tribonacci word $\boldsymbol u \in \Sigma^\omega$ and
  let $\widetilde{\varphi} : \Gamma^+\rightarrow \Gamma^+$ be its
  lifting to the closed set of balanced pairs, as defined in
  Section~\ref{sec:echo}, with
  $\pi_1,\pi_2:\Gamma^+\rightarrow \Sigma^+$ the associated projection
  morphisms.  We recall the construction showing that $\boldsymbol u$
  is echoing, and argue that the non-vanishing condition is also
  satisfied.  In particular, we refer to the shifts 
  $r_n:=0$ and $s_n:=|\varphi^{n+n_0}(0)|$, and intervals $I_{n,j}$ that witness
  that $\boldsymbol u$ is echoing.

  For all $n\in \mathbb N$ and $j\in \{1,2,\ldots\}$, define the
  polynomial $P_{n,j}(x) \in \mathbb{Z}[x]$ by
\begin{gather} P_{n,j}(x) := \sum_{i \in I_{n,j}} (u_{i+s_n}-u_{i+r_n})x^{i} \,
.
\label{eq:Poly}
\end{gather}
Inspecting Condition~4 of Definition~\ref{def:echoing2},  to show that
$\boldsymbol u$ is 
  non-vanishing we must prove that for all~$\beta$ such that 
  $|\beta|>1$  there
  exist  indices $j_0<j_1$ such that $P_{n,j_0}(\beta^{-1})\neq 0$
  and $P_{n,j_1}(\beta^{-1})\neq 0$ for infinitely many $n$.  To this
  end,  we give a recursive
  characterisation of the polynomials $P_{n,j}$.

  Recall the mismatch symbols $a_0,\ldots,a_7$ in $\Gamma$.
  For all $i \in \{0,\ldots,7\}$ and $n\in\mathbb{N}$, define the
  polynomial $Q_{n,i}(x) \in \mathbb Z[x]$ by the equation 
\begin{gather}
  Q_{n,i}(x):=\sum_{k=0}^{\ell-1} (\pi_1(\widetilde{\varphi}^n(a_i))_k -
  \pi_2(\widetilde{\varphi}^n(a_i))_k)x^{k} \,  ,
\label{eq:MATCH}
\end{gather}
where $\ell=|\pi_1(\widetilde{\varphi}(a_i))|$.
For example, we have $Q_{0,6}(x) = -2+x-x^2+2x^3$ since $a_6
= \begin{bmatrix} 0&1&0&2\\ 2&0&1&0
\end{bmatrix}$.  Referring to the list of symbols
$a_{i_1},a_{i_2},\ldots$ and the corresponding 
intervals $I_{n,j}$ in~\eqref{eq:PART22}, we observe that for all
$n\in\mathbb N$ and $j \in \{1,2,\ldots\}$, polynomial $P_{n,j}$ is the
product $Q_{n,i_j}$ and a monomial.  Thus our task is to find
$j_0<j_1$ such that $Q_{n,i_{j_0}}(\beta^{-1})\neq 0$ and
$Q_{n,i_{j_1}}(\beta^{-1})\neq 0$ for infinitely many $n$.
  
We claim that for all $n$ there exists $i\in \{0,\ldots,7\}$ such that
$Q_{n,i}(\beta^{-1}) \neq 0$.  From the claim it follows that there
exists $i^* \in \{0,\ldots,7\}$ such that
$Q_{n,i^*}(\beta^{-1}) \neq 0$ for infinitely many $n \in \mathbb N$.
Since $\widetilde{\varphi}$ acts primitively on the set of mismatch
symbols, for $n_0$ sufficiently large, ${i^*}$ occurs infinitely often
in the list $i_1,i_2,\ldots$ in~\eqref{eq:partition}.  In particular,
there exist $j_0<j_2$ such that $Q_{n,i_{j_0}}(\beta^{-1})\neq 0$ and
$Q_{n,i_{j_1}}(\beta^{-1})\neq 0$ for infinitely many $n$.  This is
what we wanted to prove and it remains to justify the claim.

For all $n\in \mathbb N$ we have the vector 
$\boldsymbol{Q}_n:=(Q_{n,0},\ldots,Q_{n,7}) \in 
\mathbb{Z}[x]^{8}$ of polynomials.  We want to show that
$\boldsymbol{Q}_n(\beta^{-1})\neq 0$ for all $n$.
The action of $\widetilde{\varphi}$ gives rise to a
recurrence for $\boldsymbol Q_n$.  For example, we have
$\widetilde{\varphi}(a_6) = a_8a_1a_3a_1$ and hence
\[ \widetilde{\varphi}^{n+1}(a_6) = \widetilde{\varphi} ^n(a_8)
\widetilde{\varphi} ^n(a_1) \widetilde{\varphi} ^n(a_3)
\widetilde{\varphi} ^n(a_1) \, . \]
We deduce that 
\[Q_{n+1,6}(x)= x^{\ell_{n,8}}Q_{n,1}(x)+ x^{\ell_{n,8}+\ell_{n,1}} Q_{n,3}(x)+
  x^{\ell_{n,8}+\ell_{n,1}+\ell_{n,2}}Q_{n,1}(x)\,  ,\]
where $\ell_{n,i} = |\pi_1(\widetilde{\varphi}^n(a_i))|$ for all $i\in\{0,\ldots,8\}$.
Calculating the corresponding recurrences for each polynomial $Q_{n+1,i}(x)$ 
yields the vector recurrence $\boldsymbol{Q}_{n+1} = M_n \, \boldsymbol{Q}_n$, where
\begin{gather} M_n := x^{\ell_{n,8}} \cdot
  \begin{pmatrix}
 0 &   0        &  0  &     0  & 1  &  0  &  0     & 0      \\
 0 &   0        &  0  &     0  &  0 & 1 &     0 &    0 \\
   0 &   1        &     0 &        0 &    0 &   0 &      0 &  0     \\
 1 &            0 &     0 &        0 &    0 &   0 &      0 &    0   \\
   0 &            0 &     0 &        0 &    0 &   0 &      0 &    1 \\
   0 &            0 &     0 &        0 &    0 &   0 &   1  &   0    \\
  0  &1+x^{\ell_{n,1}+\ell_{n,3}} &     0 & x^{\ell_{n,1}}       &    0 &   0 &      0 &     0  \\
 1+x^{\ell_{n,0}+\ell_{n,2}}   &  0   & x^{\ell_{n,0}}       &      0 &   0& 0 &   0  &      0
\end{pmatrix}
\label{eq:matrix}
\end{gather}

But $Q_{0,0}=(x-1)$ and $\mathrm{det}(M_n)=x^{\ell_{n,0}+\ell_{n,1}+8\ell_{n,8}}$.
Hence for all $\beta\neq 1$ we have
$\boldsymbol{Q}_0(\beta^{-1}) \neq \boldsymbol{0}$ 
and, by induction on $n$,
$\boldsymbol{Q}_n(\beta^{-1}) \neq \boldsymbol{0}$ for all $n\in
\mathbb N$.  The claim is proven and this completes
the argument that $\boldsymbol u$ is strongly echoing.

\subsection{Transcendence Result}
For a strongly echoing sequence $\boldsymbol u$ and algebraic
$\beta$, we can show that $\llbracket\boldsymbol{u}\rrbracket_{\beta}$ is outright
transcendental.  Whereas in the proof of Theorem~\ref{thm:main} we
assume that $\llbracket\boldsymbol{u}\rrbracket_{\beta}$ is algebraic and use the
Subspace Theorem to show that it lies in $\mathbb Q(\beta)$, below we
continue the argument by applying the Subspace Theorem a second time
(employing the Non-Vanishing Condition) to derive a contradiction.

\begin{theorem}
       Let $\Sigma:= \{0,1,\ldots, b-1\}$ and suppose that 
  the sequence
  $\boldsymbol{u}=\langle u_m\rangle_{m=0}^\infty \in \Sigma^{\omega}$ strongly echoing.  Then
  for any algebraic number $\beta$ such that $|\beta|>1$, the sum
  $\alpha:=\sum_{m=0}^\infty \frac{u_m}{\beta^m}$ is transcendental.
\label{thm:main2}
\end{theorem}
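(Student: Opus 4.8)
The plan is to build on the proof of Theorem~\ref{thm:main}, which already shows that if $\alpha:=\sum_{m=0}^\infty u_m/\beta^m$ is algebraic then $\alpha\in\mathbb{Q}(\beta)$. So suppose for contradiction that $\alpha$ is algebraic; then $\alpha\in\mathbb{Q}(\beta)$ and we may work entirely inside the number field $K:=\mathbb{Q}(\beta)$. I keep the same set $S$ of places, the same distinguished place $v_0$, and the same constant $e_0$. The key new ingredient is the Non-Vanishing Condition of Definition~\ref{def:echoing2}: it supplies indices $1\le j_0<j_1$ such that, writing $P_{n,j}(\beta^{-1}):=\sum_{i\in I_{n,j}}(u_{i+s_n}-u_{i+r_n})\beta^{-i}$, we have $P_{n,j_0}(\beta^{-1})\neq 0$ and $P_{n,j_1}(\beta^{-1})\neq 0$ for all $n$.

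First I would redo the tuple construction, but now keeping only the two non-vanishing blocks explicitly. For $n\in\mathbb N$ set
\begin{equation*}
a_{n,1}:=\beta^{s_n},\quad a_{n,2}:=\beta^{r_n},\quad a_{n,3}:=\sum_{i=0}^{s_n}u_i\beta^{s_n-i}-\sum_{i=0}^{r_n}u_i\beta^{r_n-i},\quad a_{n,4}:=\sum_{i\in I_{n,j_0}}(u_{i+s_n}-u_{i+r_n})\beta^{-i},\quad a_{n,5}:=\sum_{i\in I_{n,j_1}}(u_{i+s_n}-u_{i+r_n})\beta^{-i}.
\end{equation*}
All five components lie in $\mathcal O_S$; by the Non-Vanishing Condition $a_{n,4},a_{n,5}\neq 0$ for every $n$. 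As in the proof of Theorem~\ref{thm:main}, choosing $\delta$ large enough (using Expanding Gaps) the right endpoint $t_n$ of $I_{n,j_1}$ satisfies $t_n\gg s_n$, and by Density $\sum_j |I_{n,j}|\le\varepsilon t_n$, so the tail estimate gives $|L(\boldsymbol a_n)|_{v_0}\ll|\beta|^{-e_0 t_n}$ for the linear form $L(x_1,\dots,x_5):=\alpha x_1-\alpha x_2-x_3-x_4-x_5$. The same height and absolute-value bookkeeping as before shows that for infinitely many $n$ the Subspace-Theorem inequality holds with a fixed positive exponent, so Theorem~\ref{thm:SUBSPACE} yields a non-zero linear form $F=\sum_{i=1}^5\gamma_i x_i$ with coefficients in $\mathbb Q(\beta)$ such that $F(\boldsymbol a_n)=0$ for infinitely many $n$.

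The heart of the argument is then a gap analysis via Proposition~\ref{prop:gap}, now applied to force $F$ to be trivial. The relation $F(\boldsymbol a_n)=0$ says $P_n(\beta)=0$ for the Laurent polynomial $P_n(x):=\gamma_1 x^{s_n}+\gamma_2 x^{r_n}+\gamma_3\big(\sum_{i=0}^{s_n}u_i x^{s_n-i}-\sum_{i=0}^{r_n}u_i x^{r_n-i}\big)+\gamma_4\sum_{i\in I_{n,j_0}}(u_{i+s_n}-u_{i+r_n})x^{-i}+\gamma_5\sum_{i\in I_{n,j_1}}(u_{i+s_n}-u_{i+r_n})x^{-i}$, whose height is bounded independently of $n$ and whose degree is $O(s_n)$. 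Because $d(I_{n,j},I_{n,j+1})\asymp s_n$ with the $\asymp$ now giving a two-sided bound, the exponents appearing in the $\gamma_4$-block and in the $\gamma_5$-block are separated by a gap $\gg s_n$ from each other and from the remaining terms, so repeated application of Proposition~\ref{prop:gap} (for $n$ large, using $\log H(\beta)>0$) forces each block to vanish separately: $\gamma_4\,P_{n,j_0}(\beta^{-1})=0$ and $\gamma_5\,P_{n,j_1}(\beta^{-1})=0$. Since $P_{n,j_0}(\beta^{-1})\neq 0$ and $P_{n,j_1}(\beta^{-1})\neq 0$, we conclude $\gamma_4=\gamma_5=0$, and then a further application of the gap proposition to $\gamma_1 x^{s_n}+\gamma_2 x^{r_n}+\gamma_3(\cdots)$ (using $s_n-r_n\gg s_n$) forces $\gamma_1=\gamma_2=\gamma_3=0$ as well, contradicting $F\neq 0$.

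The main obstacle is making the gap argument fully rigorous when the $\gamma_4$- and $\gamma_5$-blocks are themselves short polynomials rather than monomials: one must check that the \emph{interior} structure of each block does not interfere, i.e. that the degree-spread within a single $I_{n,j}$ is $o(s_n)$ (which follows from the Density Condition, since $|I_{n,j}|\le\varepsilon t_n$ and in fact the individual blocks have length $o(s_n)$ by uniform recurrence in the proof of Lemma~\ref{lem:binary}), so that Proposition~\ref{prop:gap} genuinely isolates the two blocks from each other and from the bulk term. Once this separation of scales is in hand, the rest is the routine Subspace-Theorem bookkeeping already carried out in the proof of Theorem~\ref{thm:main}, and the Non-Vanishing Condition delivers the contradiction that upgrades ``$\alpha\in\mathbb Q(\beta)$'' to ``$\alpha$ transcendental''.
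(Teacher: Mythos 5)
There is a genuine gap, and it is the central one: a \emph{single} application of the Subspace Theorem cannot produce the contradiction you want. Under your hypothesis that $\alpha$ is algebraic, the form $F$ that the Subspace Theorem returns can perfectly well be supported on $\{x_1,x_2,x_3\}$ with $\gamma_3\neq 0$ --- indeed the analysis in the proof of Theorem~\ref{thm:main} shows this is exactly what happens, and it is how that proof extracts $\alpha=-\gamma_1/\gamma_3\in\mathbb Q(\beta)$. Your final step, ``a further application of the gap proposition to $\gamma_1x^{s_n}+\gamma_2x^{r_n}+\gamma_3(\cdots)$ forces $\gamma_1=\gamma_2=\gamma_3=0$'', is where the argument breaks: the block $\gamma_3\bigl(\sum_{i=0}^{s_n}u_ix^{s_n-i}-\sum_{i=0}^{r_n}u_ix^{r_n-i}\bigr)$ has monomials at essentially every degree in $\{0,\dots,s_n\}$, so there is no gap separating it from $\gamma_1x^{s_n}$ or $\gamma_2x^{r_n}$, and Proposition~\ref{prop:gap} simply does not apply. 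The paper's proof gets around this by a \emph{second} application of the Subspace Theorem: it subtracts a multiple of $F$ from $L$ to obtain a form $L'$ with no $x_3$-term that still satisfies $|L'(\boldsymbol a_n)|\ll|\beta|^{-t_n}$, and applies Theorem~\ref{thm:SUBSPACE} to $L'$ with distinguished index $3+j_0$. That second application is exactly where the Non-Vanishing Condition earns its keep: one needs the lower bound $|a_{n,3+j_0}|\gg|\beta|^{-c_2s_n}$ from Proposition~\ref{prop:lower} (which requires $a_{n,3+j_0}\neq 0$ and uses the upper bound $d(I_{n,j},I_{n,j+1})\ll s_n$ to control the degree of $a_{n,3+j_0}$ as a polynomial in $\beta$) to bound the product over places. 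The resulting form $G$ has no $x_3$ in its support, so \emph{now} all its blocks are pairwise separated by gaps $\gg s_n$ and the gap argument kills every coefficient, giving the contradiction. Your proposal never performs this second application and never uses the lower bound, so the contradiction is not actually reached.

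A secondary but still fatal problem is the reduction to a five-coordinate tuple. If you drop the blocks $a_{n,3+j}$ for $j\notin\{j_0,j_1\}$, then $L(\boldsymbol a_n)$ contains the contribution $\sum_{j\neq j_0,j_1}\sum_{i\in I_{n,j}}(u_{i+s_n}-u_{i+r_n})\beta^{-i}$, whose leading term comes from an interval at distance only $O(s_n)$ from the origin with an absolute constant; since $j_0<j_1$ are \emph{fixed} by the Non-Vanishing Condition, you cannot make the right endpoint $t_n$ of $I_{n,j_1}$ exceed $\rho s_n$ for the specific large constant $\rho=4|S|\log\kappa/(e_0\log|\beta|)$ needed to beat $\prod_{v\in S}|a_{n,3}|_v\leq\kappa^{|S|(s_n+2)}$ in the Subspace inequality. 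You must keep all blocks up to $I_{n,\delta}$ with $\delta$ chosen large (in particular $\delta\geq j_1$), exactly as in the proof of Theorem~\ref{thm:main}; the paper additionally enlarges $\delta$ so that $t_n\geq(\rho+6c_2)s_n$ to absorb the factor $|a_{n,3+j_0}|_{v_0}^{-1}$ arising in the second Subspace application.
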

\begin{proof}
  We suppose that $\alpha$ is algebraic and obtain a contradiction.
  We refer to the proof of Theorem~\ref{thm:main}, whose notation we
  use throughout this proof.  In particular, we work with the linear form $L$
  in~\eqref{eq:FORM} and tuples
  $\boldsymbol a_n \in (\mathcal O)_S^{\delta+3}$
  in~\eqref{eq:TUPLES}, which satisfy the inequality
  $|L(\boldsymbol a_n)| \ll |\beta|^{-t_n}$ shown in~\eqref{eq:STRICT}.  The only
  deviation from the set up of Theorem~\ref{thm:main} 
  concerns that choice of $\delta \in \mathbb N$, as we now
  explain.

  For the index $j_0$ from the Non-Vanishing Condition, since
  $d(I_{n,j},I_{n,j+1}) \ll s_n$ for all $j<j_0$ by the Expanding-Gaps
  Condition, we can write $a_{n,3+j_0}$ as a polynomial in $\beta$ of
  height at most $b$ and degree bounded by a constant multiple of
  $s_n$.  Applying Proposition~\ref{prop:lower}, there exists a
  constant $c_2>0$ such that $|a_{n,3+j_0}| \gg |\beta|^{-c_2s_n}$.
  By the Expanding Gaps Condition we can choose $\delta$ sufficiently
  large that $\delta \geq j_0,j_1$ and for all $n$ the right endpoint
  $t_n$ of interval $I_{n,\delta}$ satisfies $t_n \geq (\rho+6c_2)s_n$
  for all $n$.
  
  From the proof of Theorem~\ref{thm:main} there is a linear form
  \[ F(x_1,\ldots,x_{3+\delta}) = \alpha_1x_1+
    \alpha_2x_2+\alpha_3x_3+\sum_{j \in J} \alpha_{3+j}x_{3+j} \] with
  algebraic coefficients such that $F(\boldsymbol{a}_n)=0$ for
  infinitely many $n\in\mathbb{N}$, $\alpha_3\neq 0$, and
  $\alpha_{3+j}=0$ for all $j \in J$.  Subtracting a suitable multiple
  of $F$ from $L$ we obtain a new linear form $L'$
  whose support contains $x_{3+j}$ for all $j \in J$ but does not
  contain $x_3$ and that satisfies
  $|L'( \boldsymbol a_n)| \ll |\beta|^{-t_n}$ for infinitely many
  $n$.  Note that the support of $L'$ contains at least two
  variables, namely $x_{3+j_0}$ and $x_{3+j_1}$ for $j_0$ and $j_1$ as
  in the Non-Vanishing Condition.

  Our aim is to apply the Subspace Theorem to $L'$.  To this end,
  from~\eqref{eq:BOUND1} and~\eqref{eq:BOUND2} we have
\begin{gather}
    \prod_{(i,v)\in (\{1,2,3\} \cup J) \times S} |a_{n,i}|_v  \, \ll \,
 |\beta|^{e_0t_n/2} \,  .
\label{eq:BOUND3A}
\end{gather}
Recalling that $|a_{n,3+j_0}| \gg |\beta|^{-c_2s_n}$ and  $|L'( \boldsymbol a_n)| \ll |\beta|^{-t_n}$,
since $t_n \geq 6c_2s_n$ we have
\begin{gather}
|L'(\boldsymbol a_n)|_{v_0} \, |a_{n,3+j_0}|_{v_0}^{-1} \ll |\beta|^{-e_0t_n}
|\beta|^{e_0c_2s_n} \leq |\beta|^{-5e_0t_n/6} \, .
\label{eq:BOUND4A}
\end{gather}
Multiplying~\eqref{eq:BOUND3A} and~\eqref{eq:BOUND4A} we get 
\begin{eqnarray}
 |L'(\boldsymbol{a}_n)|_{v_0} \cdot 
    \prod_{\substack{(i,v)\in (\{1,2,3\} \cup J) \times S\\(i,v) \neq 
        (3+j_0,v_0)}}|a_{n,i}|_v  \ll 
    |\beta|^{-e_0t_n/3} \, .
\label{eq:INEQ4A}
\end{eqnarray}
For all but finitely many $n$, the right-hand side
of~\eqref{eq:INEQ4A} is less than
$|\beta|^{-e_0 t_n/4 }$.  On the other hand, there
exists a constant $c_3>0$ such that the height of $\boldsymbol a_n$
satisfies the bound $H(\boldsymbol{a}_n) \leq |\beta|^{c_3 t_n}$ for
all $n$.  Thus the right-hand
side of~\eqref{eq:INEQ4A} is at most
$H(\boldsymbol{a}_n)^{-e_0/4c_3}$ for infinitely many $n$.  Thus
we may apply the Subspace Theorem to obtain a non-zero linear form
\[ G(x_1,\ldots,x_{3+\delta})
  :=\beta_1x_1+\beta_2x_2+\sum_{j \in J} \beta_{3+j} x_{3+j}\] with
algebraic coefficients whose support does not include $x_3$ such that
$G(\boldsymbol a_n)=0$ for infinitely many $n$.
As shown in the proof of the claim at the end of 
Theorem~\ref{thm:main}, the conditions that $G(\boldsymbol a_n)=0$ for
infinitely many $n$ and that $x_3$ not appear in the support of $G$
entail that $G$ is identically zero, which yields the desired contradiction.
\end{proof}

\section{The $k$-Bonacci Word is Strongly Echoing}
\label{sec:kbon}
\label{sec:NV}

Let $k\geq 2$ and consider the alphabet $\Sigma := \{0,\ldots,k-1\}$.
The $k$-bonacci morphism $\varphi:\Sigma^+ \rightarrow \Sigma^+$ is
defined by
$\varphi(0):=01, \varphi(1):=02, \ldots,\varphi (k-2) = 0\,(k-1)$, and
$\varphi(k-1)=0$.  Let $\boldsymbol u := \lim_{n\rightarrow\infty} \varphi^n(0)$ be the
$k$-bonacci word.  The cases $k=2$ and $k=3$ respectively yield the
Fibonacci and Tribonacci words as considered in Sections~\ref{sec:fib}
and~\ref{sec:trib}.  The main result
of this section shows that $\boldsymbol u$ is strongly echoing.

\begin{theorem}
For $k\geq 2$ the $k$-bonacci word is strongly echoing. 
\label{thm:kbon}
\end{theorem}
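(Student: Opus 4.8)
The plan is to generalise the construction carried out for the Tribonacci word in Section~\ref{sec:echo2} to an arbitrary $k\geq 2$. First I would set up the combinatorial machinery: compare $\boldsymbol u$ against $\sigma(\boldsymbol u)$ by building a closed set $\Gamma$ of irreducible balanced pairs for the $k$-bonacci morphism $\varphi$, together with the lift $\widetilde\varphi:\Gamma^+\to\Gamma^+$. For general $k$ one expects the mismatch alphabet $\Gamma_0$ to consist of pairs of the form $(x,\widetilde x)$ where $x$ is a factor of $\boldsymbol u$ and $\widetilde x$ is its reverse, analogous to $a_0,\ldots,a_7$ in the Tribonacci case; the key structural fact to establish is that this set is \emph{finite} and closed under $\widetilde\varphi$, with $\widetilde\varphi$ mapping each mismatch to a string beginning with a coincidence symbol followed by further mismatch symbols. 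Since $\varphi$ is an irreducible Pisot morphism on a binary alphabet when $k=2$ and, more importantly for general $k$, is known to satisfy the coincidence condition (the $k$-bonacci morphism has pure discrete spectrum---this can be cited or checked directly via the balanced-pair algorithm), Lemma~\ref{lem:binary} already yields that $\boldsymbol u$ is echoing. So the shifts $r_n:=0$, $s_n:=|\varphi^{n+n_0}(0)|$ and the intervals $I_{n,j}$ are in place, and Conditions~1--3 of Definition~\ref{def:echoing2} hold (the upper bound $d(I_{n,j},I_{n,j+1})\asymp s_n$ in the strengthened Expanding-Gaps Condition follows because consecutive mismatch symbols in $\widetilde\varphi^{n_0}(\boldsymbol w)$ are separated by a bounded number of coincidence symbols, so the gaps are also $\ll s_n$).

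The substance of the proof is the Non-Vanishing Condition. Following Section~\ref{sec:echo2}, for each mismatch symbol $a_i\in\Gamma_0$ I would define the polynomial
\[
 Q_{n,i}(x):=\sum_{m} \bigl(\pi_1(\widetilde\varphi^n(a_i))_m-\pi_2(\widetilde\varphi^n(a_i))_m\bigr)x^m,
\]
collect them into a vector $\boldsymbol Q_n=(Q_{n,i})_{a_i\in\Gamma_0}$, and read off from the images $\widetilde\varphi(a_i)$ a matrix recurrence $\boldsymbol Q_{n+1}=M_n\boldsymbol Q_n$ with $M_n\in\mathbb Z[x]^{|\Gamma_0|\times|\Gamma_0|}$ whose entries are $x^{\ell_{n,\cdot}}$ times $0/1$ matrices plus monomial corrections (exactly as in~\eqref{eq:matrix}). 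The crucial point is to compute $\det(M_n)$ and verify it is a monomial in $x$—this forces $M_n(\beta^{-1})$ to be invertible for every $\beta\neq 0$, and then an easy induction from a nonzero base case $\boldsymbol Q_0(\beta^{-1})\neq\boldsymbol 0$ (some $Q_{0,i}$ equals $x-1$ up to sign, which is nonzero at $\beta^{-1}$ since $|\beta|>1$ so $\beta\neq 1$) shows $\boldsymbol Q_n(\beta^{-1})\neq\boldsymbol 0$ for all $n$. Hence for every $n$ some mismatch symbol $a_{i}$ has $Q_{n,i}(\beta^{-1})\neq 0$; by pigeonhole some fixed $i^*$ works for infinitely many $n$, and by primitivity of the action of $\widetilde\varphi$ on $\Gamma_0$ (making $n_0$ large), $a_{i^*}$ occurs infinitely often along the mismatch list, so there are indices $j_0<j_1$ with $Q_{n,i_{j_0}}(\beta^{-1}),Q_{n,i_{j_1}}(\beta^{-1})$ both nonzero for infinitely many $n$. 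Since each $P_{n,j}$ of Definition~\ref{def:echoing2} is $Q_{n,i_j}$ times a monomial, this is precisely Non-Vanishing (passing to the relevant subsequence of $n$, which is harmless in the definition).

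The main obstacle is the determinant computation for general $k$: one must exhibit $\Gamma_0$ explicitly enough (or describe its structure uniformly in $k$) to see that the transition matrix $M_n$ has monomial determinant. I would try to argue this structurally rather than by brute force: the permutation-like backbone of $M_n$ (each mismatch $a_i$ contributes, after one application of $\widetilde\varphi$, exactly one ``leading'' mismatch symbol, with the remaining contributions being monomial multiples of other mismatches) should make $M_n$ a monomial matrix plus strictly ``lower-order'' perturbations supported off the backbone in a way that does not affect the determinant—yielding $\det(M_n)=\pm x^{N_n}$ for an explicit exponent $N_n$ that is a sum of the lengths $\ell_{n,i}$. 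The symmetry $a_i\leftrightarrow a_{i}^{\mathrm{op}}$ (swapping the two rows of a tile) pairs up the mismatch symbols and should be exploited to organise $\Gamma_0$ and to see that the backbone of $M_n$ is an involution-type permutation. A secondary point requiring care is verifying, uniformly in $k$, that the balanced-pair algorithm really does terminate with coincidence for the $k$-bonacci morphism and produces a $\Gamma$ with the clean ``coincidence-then-mismatches'' shape used above; if a direct verification is unwieldy, one can instead invoke that $k$-bonacci morphisms are irreducible Pisot with pure discrete spectrum (a known result) together with Theorem~\ref{thm:HS}.
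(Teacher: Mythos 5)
Your overall strategy coincides with the paper's: lift $\varphi$ to a morphism $\widetilde\varphi$ on a finite closed set of irreducible balanced pairs, obtain the echoing data from Lemma~\ref{lem:binary} (with the upper bound $d(I_{n,j},I_{n,j+1})\ll s_n$ coming from uniform recurrence, exactly as you say), encode the mismatch discrepancies as a vector of polynomials $\boldsymbol Q_n$ satisfying $\boldsymbol Q_{n+1}=M_n\boldsymbol Q_n$, show $\det(M_n)$ is a monomial so that $M_n(\beta^{-1})$ is invertible, and finish Non-Vanishing by induction from $Q_{0,0}=x-1$ together with the pigeonhole/primitivity argument producing two indices $j_0<j_1$. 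All of that matches the paper.

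The gap sits exactly where you flag ``the main obstacle'': your justification that $\det(M_n)$ is a monomial does not work as stated. Writing $M_n$ as a ``monomial permutation backbone plus lower-order perturbations off the backbone'' does not by itself control the determinant --- a matrix such as $\left(\begin{smallmatrix} x & 1\\ 1 & x\end{smallmatrix}\right)$ is a monomial permutation matrix plus off-backbone monomials and has determinant $x^2-1$. What is actually needed, and what the paper proves, is that in the Leibniz expansion of $\det(M_n)$ \emph{only one} permutation contributes, i.e., that the incidence graph of $\widetilde\varphi$ restricted to the mismatch symbols admits a \emph{unique} cycle cover (Proposition~\ref{prop:unique}). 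Combined with the observation that the selected entry $(M_n)_{i,s(i)}$ is a single monomial --- because the selected symbol occurs exactly once in $\widetilde\varphi(a_i)$, namely as the central factor of an iterated palindromic closure --- this yields $\det(M_n)=\prod_i (M_n)_{i,s(i)}$, a monomial. Establishing that uniqueness is itself the substantive work: the paper first describes $\Gamma$ explicitly and uniformly in $k$ via iterated palindromic closures (the mismatch symbols are triples $[a,b,c]$ with top row $a\,\mathrm{pal}(b)\mathrm{pal}(c)^{-1}$ and its reverse below), computes $\widetilde\varphi$ on these symbols in closed form, and then invokes Berge's theorem: a second cycle cover would produce an alternating cycle, which is excluded by a strict monotonicity argument on the indices ($c_1>c_3>\cdots>c_{2m+1}>c_1$, a contradiction). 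Your proposal would need both the explicit uniform description of $\Gamma_0$ (which you defer as a ``secondary point'') and this uniqueness-of-cycle-cover argument (which you do not anticipate); without them the determinant computation, and hence the Non-Vanishing Condition, is not established.
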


The proof of Theorem~\ref{thm:kbon} will be given below.  To prepare
the ground we first describe the output of
the balanced-pair algorithm for the $k$-bonacci word and the
consequent lifting of $\varphi$ to a morphism $\widetilde{\varphi}$ on
balanced pairs.  As with the example of the Tribonacci word, the
key to establishing the Non-Vanishing property is to show that a
certain matrix of polynomials related to $\widetilde{\varphi}$
(generalising~\eqref{eq:matrix}) is
non-singular when evaluated at any number
$\beta\in\mathbb C\setminus \{0,1\}$.

\subsection{Balanced Pairs for the $k$-Bonacci Morphism}
Write $\Sigma_{\bot}:=\Sigma\cup\{-1\}$.
For $a \in \Sigma_\bot$ the \emph{iterated palindromic closure}
$\mathrm{pal}(a) \in \Sigma^*$ is inductively defined by
$\mathrm{pal}(-1):=\varepsilon$ and
$\mathrm{pal}(a) := \mathrm{pal}(a-1) \,a\, \mathrm{pal}(a-1)$ for
$a > -1$.\footnote{Note that 
  $\mathrm{pal}(a)$ denotes the iterated palindromic closure of the
  string $01\cdots a$, as introduced by de Luca~\cite{Luca97a}.}
In manipulating balanced pairs we we write expressions in the free
group generated by $\Sigma$ that simplify to elements 
of the free monoid generated by $\Sigma$.  For example,
for $x,y \in \Sigma^*$ we write $y^{-1}x$ when $y$ is a prefix of
$x$.
\begin{enumerate}
\item For $a \in \Sigma$  we write 
$[a]$ for the balanced pair
$\begin{bmatrix} a\\a \end{bmatrix}$.
\item  For 
  $a>b>c$ in $\Sigma_\bot$, 
$[a,b,c]$ denotes the balanced pair $\begin{bmatrix} a\, \mathrm{pal}(b) \mathrm{pal}(c)^{-1} \\
  \mathrm{pal}(c)^{-1}\mathrm{pal}(b)\,  a  \end{bmatrix}$.  This pair 
is irreducible since the only occurrences of $a$ are on the top left 
and bottom right.
\item  For   $a<b<c$ in $\Sigma_\bot$, 
$[a,b,c]$ denotes the balanced pair
$\begin{bmatrix} \mathrm{pal}(a)^{-1}\mathrm{pal}(b)\,  c  \\
  c\, \mathrm{pal}(b) \mathrm{pal}(a)^{-1} \end{bmatrix}$.
This pair 
is irreducible since the only occurrences of $c$ are on the top right 
and bottom left.
\end{enumerate}
We consider the following alphabet of irreducible balanced pairs:
\[ \Gamma := \{[a]:a\in \Sigma\} \cup 
  \{[a,b,c]\in (\Sigma_\bot)^3 : a>b>c \text{ or } a<b<c\} \,  .\]

We extend the definition of iterated palindromic closure to a map
$\mathrm{pal}:\Sigma\times \Sigma\rightarrow \Gamma^*$, defined
inductively by $\mathrm{pal}(a,a) := \varepsilon$ and 
\[ \mathrm{pal}(a,b) := \begin{cases}
    \mathrm{pal}(a-1,b)\, [a,b,-1]\, \mathrm{pal}(a-1,b) & (a>b)\\
    \mathrm{pal}(a,b-1) \, [-1,a,b] \, \mathrm{pal}(a,b-1) & (a<b)
  \end{cases}  \]
We use this map to characterise the lifting of $\varphi$ to a morphism
$\widetilde{\varphi} :\Gamma^+\rightarrow \Gamma^+$ as follows:
\[
  \begin{array}{rcll}
  \widetilde{\varphi} ([a]) & := & \begin{cases}
    [0] \, [a+1] &  \text{if $a< k-1$} \\
    [0] \,  & \text{if $a=k-1$} 
  \end{cases}  & \\[0.2ex]
  \widetilde{\varphi} ([a,b,c]) &  :=&  \begin{cases}
[0]\,  [a+1,b+1,c+1] &  \text {if }a< k-1 \\ 
[0]\,    \mathrm{pal}(c+1,b+1) & \text{if }a=k-1 
\end{cases} & \quad (a>b>c)\\
  \widetilde{\varphi} ([a,b,c]) &  :=&  \begin{cases}
[0]\,  [a+1,b+1,c+1] &  \text {if }c< k-1 \\ 
[0]\,    \mathrm{pal}(c+1,b+1) & \text{if }c=k-1
\end{cases}              & \quad (a<b<c)\\
\end{array}
\]
The correctness of the above characterisation can be shown by direct calculation
using the facts that $\varphi(\mathrm{pal}(a)) = \mathrm{pal}(a+1)0^{-1}$ for
$a\in\{0,\ldots,k-2\}$ and
$\varphi(\mathrm{pal}(-1)) = \varepsilon = \mathrm{pal}(-1)$.
For example, for $k-1>a>b>c \geq 0$ we have
\begin{eqnarray*}
  \varphi(a \, \mathrm{pal}(b) \, \mathrm{pal}(c)^{-1}) &=& 0 \, (a+1) \, 
\mathrm{pal}(b+1) \, 0^{-1}\,  0 \, \mathrm{pal}(c+1)^{-1} \\
&=&   0 \, (a+1) \, \mathrm{pal}(b+1)\, \mathrm{pal}(c+1)^{-1} 
\end{eqnarray*}
and
\begin{eqnarray*}
  \varphi(\mathrm{pal}(c)^{-1}  \, \mathrm{pal}(b) \, a) &=&
  0 \, \mathrm{pal}(c+1)^{-1} \, 0 \, 0^{-1} \, \mathrm{pal}(b+1) \,  (a+1)\\ 
  &=& 0 \, \mathrm{pal}(c+1)^{-1}  \mathrm{pal}(b+1)   (a+1) \, .
  \end{eqnarray*}
We conclude that $\widetilde{\varphi}([a,b,c]) =[0]\, [a+1,b+1,c+1]$.
The other cases follow by similar reasoning.

The construction above shows that the balanced-pair algorithm
terminates with coincidence for the $k$-bonacci morphism.
Using Lemma~\ref{lem:binary} we conclude that the $k$-bonacci word is
echoing.  The rest of proof is dedicated to establishing the
Non-Vanishing condition.

The \emph{incidence graph} $\mathcal G_{\widetilde{\varphi}}$ of
$\widetilde{\varphi}$ has set of
vertices the set $\Gamma_0 := \Gamma \setminus \{[a]: a\in\Sigma\}$
comprising the mismatch symbols 
$[a,b,c]$ in $\Gamma$.  There is a directed
edge from $[a,b,c]$ to $[a',b',c']$ if and only if $[a',b',c']$ 
appears in the string $\widetilde{\varphi}([a,b,c])$.

\begin{proposition}
The incidence graph $\mathcal G_{\widetilde{\varphi}}$ has a unique
cycle cover.
\label{prop:unique}
  \end{proposition}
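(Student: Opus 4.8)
The plan is to strip the deterministic part out of $\mathcal G_{\widetilde\varphi}$ by contracting its long forced chains, and then to recognise what remains as an Eulerian uniqueness problem on a small de Bruijn-type digraph.

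\emph{Step 1 (reduction to orbit-paths).} I would first read off the local structure of $\mathcal G_{\widetilde\varphi}$ from the formula for $\widetilde\varphi$. Call a mismatch symbol $[a,b,c]$ a \emph{top} if $a=k-1$ (case $a>b>c$) or $c=k-1$ (case $a<b<c$), and a \emph{bottom} if $c=-1$ (case $a>b>c$) or $a=-1$ (case $a<b<c$). A non-top symbol has the single out-neighbour $[a+1,b+1,c+1]$ --- call this edge a \emph{shift-up} --- and a non-bottom symbol has the single in-neighbour $[a-1,b-1,c-1]$; thus the shift-up edges partition the mismatch symbols into directed paths, each running from a unique bottom up to a unique top (boundedness of the coordinates forces finiteness). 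Consequently any cycle cover --- a permutation $\rho$ of the mismatch symbols with $v\to\rho(v)$ an edge for every $v$ --- is forced to use all shift-up edges, is therefore determined by the single out-edge it takes at each top, and must send each top to a bottom. This data is exactly a permutation $\tau$ of the set of orbit-paths realised by edges of the quotient digraph $\mathcal H$ (vertices: orbit-paths; an edge $\pi\to\pi'$ whenever $\widetilde\varphi(\text{top of }\pi)$ contains the bottom of $\pi'$). Since $\mathcal G_{\widetilde\varphi}$ is simple, $\tau\leftrightarrow\rho$ is a bijection, so it suffices to show $\mathcal H$ has a unique cycle cover.

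\emph{Step 2 (parametrisation and the mirror symmetry).} An orbit-path is determined by its top, so those of type $a>b>c$ are indexed by $[k-1,B,C]$ with $-1\le C<B\le k-2$, and those of type $a<b<c$ by $[A,B,k-1]$ with $-1\le A<B\le k-2$. Evaluating $\widetilde\varphi$ on a top --- using that, for $x<y$, the word $\mathrm{pal}(x,y)$ contains exactly the mismatch symbols $[-1,x,j]$ with $x<j\le y$, and the mirror statement for $x>y$ --- gives the edges of $\mathcal H$ explicitly; every such edge joins the two types. Identifying the orbit-path $[k-1,B,C]$ with $[C,B,k-1]$ and writing $u=B+1$ and $v=C+1$ (resp.\ $v=A+1$), both out-neighbour relations of $\mathcal H$ turn into the \emph{same} relation $R$ on $\mathcal P:=\{(u,v)\in\mathbb{N}^2:0\le v<u\le k-1\}$, namely $(u,v)\mathrel R(u',v')$ iff $u'=v'+v+1$ and $u+v'\ge k-1$. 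A cycle cover of $\mathcal H$ is then precisely an ordered pair of cycle covers of the digraph $\mathcal K:=(\mathcal P,R)$, so it is enough to prove $\mathcal K$ has a unique cycle cover.

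\emph{Step 3 (an Eulerian uniqueness argument).} Let $D$ be the digraph on $\{0,1,\ldots,k-2\}$ with an edge $x\to y$ exactly when $x+y\le k-2$. The map sending $(u,v)$ to the $D$-edge from $u-v-1$ to $v$ is a bijection of $\mathcal P$ with the edge set of $D$, and it carries an $R$-edge $(u,v)\to(u',v')$ to the pair of \emph{consecutive} $D$-edges $w_0\to v\to w_2$ with $w_0=u-v-1$ and $w_2=v'$, subject to the single constraint $w_0+v+w_2\ge k-2$. Hence a cycle cover of $\mathcal K$ is exactly a transition system on $D$ --- a bijection, at each vertex, between incoming and outgoing edges --- that respects this constraint; equivalently, a decomposition of the edge set of $D$ into closed walks, each of whose consecutive triples sums to at least $k-2$. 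Fix a vertex $v$ and put $N:=k-1-v$; its incoming edges come from $w_0\in\{0,\ldots,N-1\}$ and its outgoing edges go to $w_2\in\{0,\ldots,N-1\}$, and the constraint becomes $w_0+w_2\ge N-1$. If $f$ is \emph{any} bijection of $\{0,\ldots,N-1\}$ with $w+f(w)\ge N-1$ for all $w$, then summing over $w$ gives $N(N-1)\le\sum_w\bigl(w+f(w)\bigr)=N(N-1)$, whence $w+f(w)=N-1$ for all $w$, i.e.\ $f(w)=N-1-w$. So the transition system is forced at every vertex, and the forced one is legitimate (it is a bijection, and each routed triple sums to exactly $k-2$). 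Therefore $\mathcal K$, and hence $\mathcal H$ and $\mathcal G_{\widetilde\varphi}$, has exactly one cycle cover.

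\emph{Main obstacle.} The real content lies in Steps~1 and~2: seeing that contracting the shift-up chains leaves a de Bruijn-type digraph, and pinning down its edges from the iterated-palindromic-closure description of $\widetilde\varphi$ (the out-edges of a top lead to the mismatch symbols occurring in $\mathrm{pal}(c+1,b+1)$, resp.\ in its mirror $\mathrm{pal}(b+1,a+1)$ for the case $a<b<c$). Once $D$ and its local constraint are identified, the uniqueness in Step~3 is a one-line averaging argument.
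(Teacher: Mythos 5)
Your proof is correct, but it takes a genuinely different route from the paper's. The paper exhibits the cycle cover explicitly (the selector $s$ that always chooses the central letter of $\widetilde{\varphi}([a,b,c])$), verifies by direct computation that it is a union of cycles, and then proves uniqueness via Berge's theorem: a cycle cover is unique iff it admits no alternating cycle, and alternating cycles are excluded by a strict-monotonicity contradiction ($c_1>c_3>\cdots>c_1$) around the cycle. You instead never guess the cover: you contract the forced shift-up chains (every non-top vertex has out-degree one, so those edges lie in every cover), reduce to the quotient digraph $\mathcal H$ on orbit-paths, identify both halves of $\mathcal H$ with the same relation $R$ on $\mathcal P=\{0\le v<u\le k-1\}$, transport $\mathcal P$ to the edge set of the digraph $D$ ($x\to y$ iff $x+y\le k-2$) so that cycle covers become transition systems constrained by $w_0+w_1+w_2\ge k-2$, and then force the local bijection at each vertex by the averaging identity $\sum_w(w+f(w))=N(N-1)$. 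I checked your coordinate computations (in both directions the target satisfies $u'=v+v'+1$ with constraint $u+v'\ge k-1$, after silently correcting the paper's apparent typo $\mathrm{pal}(c+1,b+1)\mapsto\mathrm{pal}(b+1,a+1)$ in the case $a<b<c=k-1$, which the paper's own selector $s$ also presupposes) and they are right. What each approach buys: the paper's argument is shorter once one has guessed $\mathcal C$, but it leans on Berge's theorem and on having the cover in hand; yours is self-contained, yields existence and uniqueness simultaneously, and the forced transition $f(w)=N-1-w$ recovers exactly the paper's central-letter map $s$, which is a pleasant consistency check. The only things you should spell out in a final write-up are the inductive verification that $\mathrm{pal}(x,y)$ for $x<y$ contains precisely the symbols $[-1,x,j]$ with $x<j\le y$ (and its mirror), and the observation that a cycle cover of the bipartite $\mathcal H$ is an ordered pair of cycle covers of $\mathcal K$; both are routine.
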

  \begin{proof}
    We specify a cycle cover $\mathcal C$ of
    $\mathcal G_{\widetilde{\varphi}}$ by defining a function
    $s:\Gamma_0\rightarrow \Gamma_0$ that
    determines an outgoing edge from every vertex.  
    Specifically, for $a>b>c$ we have
    \[ s([a,b,c]):= \begin{cases} [a+1,b+1,c+1] & a<k-1\\
        [-1,c+1,b+1] &a=k-1\end{cases} \]
and for $a<b<c$ we have 
\[ s([a,b,c]):= \begin{cases} [a+1,b+1,c+1] & c<k-1\\
    [b+1,a+1,-1] &c=k-1\end{cases} \, . \] Informally, whenever
there is a choice of an outgoing edge from a node $[a,b,c]$ in
$\mathcal G_{\widetilde{\varphi}}$ the function $s$ selects the edge
leading to the central letter of the string
$\widetilde{\varphi}([a,b,c])$.

    It can be seen
    by direct calculation that $\mathcal C$ is a cycle cover.  Indeed,
    $\mathcal C$ is a union of cycles of the following form, where
    $a>b >-1$: 
\begin{align*} [a,b,-1] &  \stackrel{s^{k-a}}{\longmapsto}
  [-1,k-a-1,k+b-a] \stackrel{s^{a-b}}{\longmapsto} [k-b-1,a-b-1,-1]
                            \stackrel{s^{b+1 }}{\longmapsto} [-1,b,a] \\
  & \stackrel{s^{k-a}}{\longmapsto} [k+b-a,k-a-1,-1]
    \stackrel{s^{a-b}}{\longmapsto} [-1,a-b-1,k-b-1]
    \stackrel{s^{b+1}}{\longmapsto} [a,b,-1] \, .
\end{align*}

We now show that $\mathcal C$ is the unique cycle cover of
$\mathcal G_{\widetilde{\varphi}}$.  To this end, recall that a
$\mathcal C$-\emph{alternating cycle} is a bipartite graph
$\mathcal{A}=(U,V,E)$, where $U$ and $V$ are disjoint sets of vertices
of $\mathcal G_{\widetilde{\varphi}}$ with $|U|=|V|\geq 2$,
$E\subseteq U\times V$ is a set of edges of
$\mathcal G_{\widetilde{\varphi}}$, and each vertex of $\mathcal{A}$
is incident to two edges in $E$, of which one lies in $\mathcal C$.
By Berge's Theorem~\cite[Chapter 2]{lovasz1986matching} the uniqueness
of $\mathcal C$ as a cycle cover is equivalent to the non-existence of
a $\mathcal C$-alternating cycle; see Figure~\ref{fig:augment} for a
representation of an alternating cycle.

The nodes in $\mathcal G_{\widetilde{\varphi}}$ that have
outdegree at least two have the form $[k-1,a,b]$ or $[a,b,k-1]$.
Hence for any bipartite subgraph of the form shown in 
Figure~\ref{fig:augment} there are two cases.  The first case is that
$a_0=a_2=\cdots=a_{2m}=k-1$ and $a_1=a_3=\cdots = a_{2m+1}=-1$.
Since the blue edges point to the central factor of an iterated
palindromic closure we have
 $c_1>c_3>\cdots c_{2m+1}>c_1$, which is a contradiction.
The second case is that 
$c_0=c_2=\cdots=c_{2m}=k-1$ and $c_1=c_3=\cdots = c_{2m+1}=-1$. 
Then we have $a_1>a_3>\cdots a_{2m+1}>a_1$, which is again a contradiction.
We conclude that an alternating cycle of the form shown in
Figure~\ref{fig:augment} cannot exist and hence the cycle cover
$\mathcal C$ is unique.
\end{proof}

\begin{center}
\begin{figure}
\begin{tikzpicture}[scale=1, every node/.style={draw=none, minimum size=1cm}, every edge/.append style={->,>=stealth,thick}]
    \node (A1) at (0, 3) {$[a_0,b_0,c_0]$};
    \node (A2) at (0, 1) {$[a_2,b_2,c_2]$};
    \node (A3) at (0, -1) {$\vdots$};
    \node (A4) at (0, -3) {$[a_{2m},b_{2m},c_{2m}]$};

    \node (B1) at (4, 3) {$[a_1,b_1,c_1]$};
    \node (B2) at (4, 1) {$[a_3,b_3,c_3]$};
    \node (B3) at (4, -1) {$\vdots$};
    \node (B4) at (4, -3) {$[a_{2m+1},b_{2m+1},c_{2m+1}]$};

    \draw (A1) edge [color=blue] (B1);
    \draw (A1) edge (B2);

    \draw (A2) edge [color=blue] (B2);
    \draw (A2) edge (B3);

    \draw (A3) edge [color=blue] (B3);
    \draw (A3) edge (B4);
    
    \draw (A4) edge [color=blue] (B4);
    \draw (A4) edge (B1);
  \end{tikzpicture}
\caption{An alternating cycle for a cycle cover $\mathcal C$.  The blue edges lie in  $\mathcal C$ and the 
  black edges are not in $\mathcal C$.  Removing the blue edges from
  $\mathcal C$ and replacing them by 
  the black edges yields a new cycle cover.  On the other hand, if
  $\mathcal C'$ is a cycle cover other than $\mathcal C$ then the
  symmetric difference of the respective sets of edges in $\mathcal C$
and $\mathcal C'$ can be partitioned into alternating paths of the above form}
  \label{fig:augment}
\end{figure}
\end{center}

\subsection{Strong Echoing Condition}
Write $\Gamma_0:=\{a_0,\ldots,a_m\}$ for the set of mismatch symbols
in $\Gamma$.
For $i \in \{0,\ldots,m\}$, define $Q_{n,i}(x) \in \mathbb{Z}[x]$ by
\[ Q_{n,i}(x) := \sum_{k=0}^{\ell-1} (\pi_1(\widetilde{\varphi}^n(a_i))_k - \pi_2(\widetilde{\varphi}^n(a_i))_k)x^{k} 
\, .
  \]
Using Proposition~\ref{prop:unique} we prove the following result:
  
 \begin{proposition}
The vector $\boldsymbol Q_n := (Q_{n,0},\ldots,Q_{n,m}) \in \mathbb
Z[x]^{m+1}$ is such that $\boldsymbol Q_n(\beta^{-1}) \neq \boldsymbol 0$
for all $\beta \in \mathbb C\setminus \{0,1\}$. 
\label{prop:non-vanish}
\end{proposition}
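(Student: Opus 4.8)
The plan is to mimic the argument for the Tribonacci word (Section~\ref{sec:echo2}), but replace the ad~hoc inspection of the $8\times 8$ matrix in~\eqref{eq:matrix} with the structural information supplied by Proposition~\ref{prop:unique}. First I would set up the recurrence $\boldsymbol Q_{n+1}(x) = M_n(x)\,\boldsymbol Q_n(x)$, where $M_n(x)$ is the matrix whose $(i,j)$ entry records, for each occurrence of the mismatch symbol $a_j$ in the string $\widetilde{\varphi}(a_i)$, a monomial $x^{d}$ with $d$ the length of $\pi_1$ applied to the prefix of $\widetilde{\varphi}^n(a_i)$ preceding that occurrence. (The entries involve the lengths $\ell_{n,i} = |\pi_1(\widetilde{\varphi}^n(a_i))|$, exactly as in the Tribonacci case.) Each monomial $x^d$ has $d\ge 0$, and $d=0$ precisely for the occurrence of the mismatch symbol that $\widetilde{\varphi}$ places immediately after the initial coincidence $[0]$ — and by the definition of $s$ in Proposition~\ref{prop:unique}, that distinguished occurrence is exactly the image $s(a_i)$. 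Hence if we reduce every entry of $M_n(x)$ modulo $x$ (i.e.\ set $x=0$), we obtain the $0/1$ incidence matrix of the cycle-cover function $s$, namely the permutation-like matrix $M_n(0) = P_s$, where $(P_s)_{i,j}=1$ iff $s(a_i)=a_j$.

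Next I would compute $\det M_n(x)$. Expanding the determinant via the Leibniz formula, the constant term (the coefficient of $x^0$ in $\det M_n$, up to the overall power of $x$ coming from the $[0]$-prefixes, which is a nonzero monomial) is $\det P_s$ summed over those permutations that pick the $x^0$-entry in every row; since $M_n(0)=P_s$ is itself a permutation matrix, this is just $\pm 1$. More precisely, each $M_n(x)$ factors as $x^{c_n}\cdot \widehat M_n(x)$ for a nonnegative integer $c_n$ (the total contribution of the $[0]$-blocks), and $\widehat M_n(0) = P_s$ has determinant $\pm 1$. The crucial point — and this is where Proposition~\ref{prop:unique} does the real work — is that $P_s$ is the \emph{unique} cycle cover, i.e.\ the \emph{only} permutation $\tau$ of $\Gamma_0$ with $(M_n)_{i,\tau(i)}\ne 0$ for all $i$; consequently the Leibniz expansion of $\det \widehat M_n(x)$ has exactly one nonzero term, $\prod_i (\widehat M_n)_{i,s(a_i)} = \prod_i x^{0} = 1$ (up to sign), so $\det M_n(x) = \pm x^{c_n}$ is a \emph{monomial}. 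Therefore $\det M_n(\beta^{-1}) \ne 0$ for every $\beta\in\mathbb C\setminus\{0\}$, so each $M_n(\beta^{-1})$ is invertible.

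The induction then closes exactly as in the Tribonacci case. For the base case, $\boldsymbol Q_0$ has among its coordinates the polynomial attached to the mismatch symbol $[1,0,-1]$ (the balanced pair $\begin{bmatrix}0&1\\1&0\end{bmatrix}$ in the binary sub-case, or the analogous rank-one mismatch in general), whose associated $Q$-polynomial is $x-1$ up to sign; since $\beta\ne 1$ we get $\boldsymbol Q_0(\beta^{-1})\ne\boldsymbol 0$. For the inductive step, $\boldsymbol Q_{n+1}(\beta^{-1}) = M_n(\beta^{-1})\,\boldsymbol Q_n(\beta^{-1})$, and since $M_n(\beta^{-1})$ is invertible and $\boldsymbol Q_n(\beta^{-1})\ne\boldsymbol 0$, we conclude $\boldsymbol Q_{n+1}(\beta^{-1})\ne\boldsymbol 0$. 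This proves the proposition for all $n$.

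The main obstacle I anticipate is the bookkeeping needed to justify that $\det M_n(x)$ is genuinely a monomial: one must verify (i) that the only occurrence of a $0$-degree monomial in row $i$ of $M_n$ sits in column $s(a_i)$ — which follows from the definition of $\widetilde\varphi$ on $[a,b,c]$, since $\widetilde\varphi$ always emits $[0]$ first and then either a single mismatch symbol or a block $\mathrm{pal}(c+1,b+1)$ whose central symbol is the image under $s$ — and (ii) that no \emph{other} permutation $\tau\ne s$ gives a nonzero product of entries, which is precisely the uniqueness of the cycle cover from Proposition~\ref{prop:unique}. Everything else — the precise form of $M_n(x)$, the identification of the coordinate of $\boldsymbol Q_0$ equal to $\pm(x-1)$, and the induction — is routine generalisation of the $k=3$ computation already carried out in Section~\ref{sec:echo2}.
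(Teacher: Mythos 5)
Your overall strategy coincides with the paper's: set up the recurrence $\boldsymbol Q_{n+1}=M_n\boldsymbol Q_n$, use the uniqueness of the cycle cover (Proposition~\ref{prop:unique}) to collapse the Leibniz expansion of $\det M_n$ to a single term, conclude that $\det M_n$ is a monomial, and induct from the base case $Q_{0,0}=x-1$. However, there is a genuine error in your justification of the key step. You claim that the degree-$0$ entry in row $i$ (after factoring out the $[0]$-prefix) sits in column $s(a_i)$, i.e.\ that $s(a_i)$ is the mismatch symbol occurring \emph{immediately after} the initial coincidence $[0]$ in $\widetilde\varphi(a_i)$, and hence that $\widehat M_n(0)=P_s$ is a permutation matrix with $\det P_s=\pm1$. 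This is false: by its definition in Proposition~\ref{prop:unique}, $s$ selects the \emph{central} symbol of $\widetilde\varphi([a,b,c])$, which in the branching case $[0]\,\mathrm{pal}(\cdot,\cdot)$ is not the first mismatch symbol. The Tribonacci matrix~\eqref{eq:matrix} already refutes your claim: for $a_6$ we have $\widetilde\varphi(a_6)=a_8a_1a_3a_1$ with $s(a_6)=a_3$, yet the symbol after $[0]=a_8$ is $a_1$, whose entry $1+x^{\ell_{n,1}+\ell_{n,3}}$ is not a monomial; setting $x=0$ there, row $6$ becomes $e_1$, as does row $2$, and rows $3$ and $7$ both become $e_0$, so $\widehat M_n(0)$ is not a permutation matrix and in fact $\det\widehat M_n(0)=0$. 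Consequently your computation of the ``constant term'' as $\det P_s=\pm1$, and the identity $\prod_i(\widehat M_n)_{i,s(a_i)}=\prod_i x^0=1$, both fail.

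The conclusion you want ($\det M_n$ is a nonzero monomial) is nevertheless true, and the paper's proof repairs exactly the step you got wrong: uniqueness of the cycle cover gives $\det M_n=\pm\prod_i (M_n)_{i,s(i)}$, and each factor $(M_n)_{i,s(i)}$ is a single monomial not because its degree is zero but because $a_{s(i)}$ occurs \emph{exactly once} in $\widetilde\varphi(a_i)$, namely as the central factor of the iterated palindromic closure (whereas the first mismatch symbol of a nontrivial $\mathrm{pal}$-block occurs twice, which is precisely why its entry is a binomial). A product of monomials is a monomial, hence $M_n(\beta^{-1})$ is invertible for $\beta\neq 0$, and your base case and induction then go through as written. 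So the fix is localised, but as stated your argument contains a step that is demonstrably false already for $k=3$.
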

    \begin{proof}
      Without loss of generality, assume that $a_0 \in \Gamma$ is the
      matched pair $(01,10)$.  Then $Q_{0,0}(x)$ is the polynomial
      $-1+x$.  Hence $\boldsymbol Q_0(\beta) \neq \boldsymbol 0$
      for $\beta\neq 1$.
        
      We next describe a recurrence
      $\boldsymbol Q_{n+1} = M_n \, \boldsymbol Q_n$, where
      $M_n \in \mathbb Z[x]^{(m+1)\times (m+1)}$.  The matrix entries
      $(M_n)_{i,j}$ are defined as follows.  Suppose
      $\widetilde{\varphi}(a_i) = a_{j_1} \cdots a_{j_s}$.  Then
      $\widetilde{\varphi}^{n+1}(a_i) = \widetilde{\varphi}^n(a_{j_1})
      \cdots \widetilde{\varphi}^n(a_{j_s})$.  This equation allows us
      to write the polynomial $Q_{n+1,i}(x)$ as a linear combination
      of the polynomials $Q_{n,0}(x),\ldots,Q_{n,m}(x)$.  Specifically,
      we have
      \[ (M_n)_{i,j} := \sum_{k:j_k=j} x^{f(k)}, \quad\text{where }
      f(k) = |\pi_1( \widetilde{\varphi}^n(a_{j_1}) \cdots
      \widetilde{\varphi}^n(a_{j_k-1}))| \, . \]

      Notice that $(M_n)_{i,j}\neq 0$ only if there is an edge from
      $a_i$ to $a_j$ in the incidence graph
      $\mathcal G_{\widetilde{\varphi}}$.  Thus we have
      $\det(M_n) = \prod_{i=0}^m (M_n)_{i,s(i)}$ for the map
      $s : \{0,\ldots,m\}\rightarrow \{0,\ldots,m\}$ that specifies
      the unique cycle cover of $\mathcal G_{\widetilde{\varphi}}$, as
      defined in the proof of Proposition~\ref{prop:unique}.  By
      definition of $s$, the symbol $a_{s(i)}$ has a single occurrence
      in the string $\widetilde{\varphi}(a_i)$, namely as the central factor
      in an iterated palindromic closure.  It follows  that $M_{i,s(i)}$ is
      a monomial for all $i\in\{1,\ldots,m\}$ and hence $\det(M_n)$ is a monomial.
      Thus $M_n(\beta^{-1})$ is non-singular for all $\beta\neq 0$.

      From recurrence $\boldsymbol Q_{n+1} = M_n \, \boldsymbol Q_n$
      and the facts that $\boldsymbol Q_0(\beta) \neq \boldsymbol 0$
      for $\beta\neq 1$ and $\det(M_n)(\beta)\neq 0$ for
      $\beta\neq 0$, we have by induction on $n\in\mathbb N$ that
      $\boldsymbol Q_n(\beta^{-1})\neq \boldsymbol 0$ for all
      $n\in\mathbb N$ and $\beta\in\mathbb C\setminus\{0,1\}$.
      \end{proof}

      We can now prove the main result of this section.
  The proof develops the construction in 
  Sections~\ref{sec:echo} and~\ref{sec:echo2} showing that the 
  Tribonacci word is strongly echoing. 

      \begin{proof}[Proof of Theorem~\ref{thm:kbon}]
  By inspection, the liftting $\widetilde{\varphi}$ of the $k$-Bonacci
  morphism $\varphi$ to the set $\Gamma$ of balanced pairs 
  satisfies the coincidence condition.  Furthermore,
 since $\varphi$ is  primitive, there exists $c_0>0$ 
    such that for all 
    $\gamma,\gamma' \in \Gamma$ and all $n\in 
    \mathbb N$, we have 
\begin{gather}
    |\pi_1(\widetilde{\varphi}^n(\gamma))| \leq c_0\, 
    |\pi_1(\widetilde{\varphi}^n(\gamma'))| \,  .
\label{eq:growth1}
  \end{gather}

  Let $\boldsymbol u \in \Sigma^\omega$ be the 
  $k$-bonacci word and let $\boldsymbol w \in \Gamma^\omega$ be the 
  image of $\boldsymbol u$ under the coding 
  $\iota:\Sigma\rightarrow\Gamma$ given by 
\[ \iota(0) := \begin{bmatrix} 0&1\\ 1&0 \end{bmatrix}
  \quad 
  \iota(1) := \begin{bmatrix} 0&2\\ 2&0 \end{bmatrix}
  \quad  \cdots\quad 
  \iota(k-2) := \begin{bmatrix} 0&k-1\\ k-1&0 \end{bmatrix}
\quad 
  \iota(k-1) := \begin{bmatrix} 0\\0 \end{bmatrix} \, . 
\]
Since $\pi_1\circ \iota = \varphi$, we have
$\pi_1(\boldsymbol w)=\boldsymbol u$.
Moreover since $\sigma\circ \pi_1\circ \iota$ and $\pi_2 \cdot \iota$
coincide on any word on $\Sigma^\omega$, we have 
$\pi_2(\boldsymbol w) =
\sigma(\boldsymbol u)$.

Given $n_0\in \mathbb N$, write 
      \begin{gather}
\widetilde{\varphi}^{n_0}(\boldsymbol w) =   w_0 a_{i_1}
 {w}_{1} a_{i_2}   w_{2} a_{i_3} \cdots 
\label{eq:partition2}
\end{gather}
where $ {w}_0, {w}_1,\ldots \in \Gamma^*$ are sequences of coincidence
symbols and $a_{i_1},a_{i_2,},\ldots \in \Gamma$ are mismatch symbols.
The word $\widetilde{\varphi}^{n_0}(\boldsymbol w)$ is uniformly
recurrent, being the morphic image of the uniformly recurrent word
$\boldsymbol u$.  Hence there is a uniform upper bound on the length
of the factors $ w_i$, depending only on $n_0$.
Let $\varepsilon>0$ be as in the definition of echoing sequence
 (Definition~\ref{def:echoing2}).
Since $\widetilde{\varphi}$ satisfies the coincidence condition, for
sufficiently large $n_0\in\mathbb{N}$ the upper asymptotic density of
mismatch symbols in $\widetilde{\varphi}^{n_0}(\boldsymbol{w})$ is at
most $\varepsilon/c_0$.

The data to show the Strong Echoing property are as follows.
For all $n\in \mathbb{N}$, consider the image of~\eqref{eq:partition2} under the $n$-fold
application of $\widetilde{\varphi}$:
    \begin{gather} \widetilde{\varphi}^{n+n_0}(\boldsymbol{w}) = 
  \widetilde{\varphi}^{n}(w_0) \underbrace{\widetilde{\varphi}^{n}(a_{i_1})}_{I_{n,1}}
  \widetilde{\varphi}^{n}(w_1)  \underbrace{\widetilde{\varphi}^{n}(a_{i_2})}_{I_{n,2}}
  \widetilde{\varphi}^{n}(w_2)
  \underbrace{\widetilde{\varphi}^{n}(a_{i_3})}_{I_{n,2}} \cdots \, .
\label{eq:PART3}
\end{gather}
We define $r_n:=0$ and $s_n:=|\varphi^{n+n_0}(0)|$.  Then
$\pi_1(\widetilde{\varphi}^{n+n_0}(\boldsymbol w)) =\boldsymbol u$ and
$\pi_2(\widetilde{\varphi}^{n+n_0}(\boldsymbol w))
=\sigma^{s_n}(\boldsymbol u)$.  For all $j\in \{1,2,\ldots\}$ we
define $I_{n,j} \subseteq \mathbb{N}$ to be the smallest interval of
positions in
$\boldsymbol u=\pi_1(\widetilde{\varphi}^{n+n_0}(\boldsymbol w))$ that
contains all mismatch symbols occurring in the factor
$\pi_1(\widetilde{\varphi}^{n}(a_{i_j}))$, as shown in~\eqref{eq:PART3}.
 By construction, this choice satisfies the Covering
Property.  To establish the Density Property we note that since the
mismatch symbols in $\widetilde{\varphi}^{n_0}(\boldsymbol w)$ have
upper asymptotic density at most $\varepsilon/c_0$, by~\eqref{eq:growth1} we have
$\mathrm{den}\left(\bigcup_{j=1}^\delta I_{n,j}\right)\leq
\varepsilon$ for all sufficiently large $\delta$.  
Finally, the
uniform upper bound on the length of the factors $w_i$ and the
bound~\eqref{eq:growth1}  imply that 
$d(I_{n,j},I_{n,j+1}) \asymp s_n$ for all $j\geq 1$.  (Note that this
growth bound holds even when the word $w_j$ is empty since
$\widetilde{\varphi}(a_{i_{j+1}})$ starts with a coincidence.)
We have thus established the Expanding-Gaps Property.

It remains to consdier the Non-Vanishing Property.  To this end, for
all $n\in \mathbb N$ and $j\in \{1,2,\ldots\}$, define the polynomial
$P_{n,j}(x) \in \mathbb{Z}[x]$ by
\begin{gather} P_{n,j}(x) := \sum_{i \in I_{n,j}} (u_{i+s_n}-u_{i+r_n})x^{i} \,
.
\end{gather}
Given~$\beta\in\mathbb C$ with $|\beta|>1$, we must show that there
exist distinct indices $j_0,j_1$ such that
$P_{n,j_0}(\beta^{-1})\neq 0$ and $P_{n,j_1}(\beta^{-1})\neq 0$ for
infinitely many $n\in\mathbb N$.  By
Proposition~\ref{prop:non-vanish}, for all $n\in\mathbb N$ there
exists $i\in \{0,\ldots,m\}$ such that $Q_{n,i}(\beta^{-1})\neq 0$.
Hence there exists $i^*$ such that $Q_{n,i^*}(\beta^{-1})\neq 0$ for
infinitely many $n\in\mathbb N$.  There are arbitrarily large choices
of $n_0$ such that $i^*$ occurs 
infinitely often in the list $i_1,i_2,i_3,\ldots$ in~\eqref{eq:partition2}.  For
such an $n_0$  there exist
$j_0<j_1$ such that $i_{j_1}=i_{j_2}=i^*$.  Then
$Q_{n,i_{j_0}}(\beta^{-1})\neq 0$ and
$Q_{n,i_{j_1}}(\beta^{-1})\neq 0$ for infinitely many $n$.  But for
all $j \in \{1,2,\ldots\}$, $P_{n,j}$ is the product of $Q_{n,i_j}$
and a monomial in $x$.  Hence $P_{n,{j_0}}(\beta^{-1})\neq 0$ and
$P_{n,{j_1}}(\beta^{-1})\neq 0$ for infinitely many $n$.
  \end{proof}

\section{Conclusion}
Resolving the Pisot conjecture is an important research objective in
dynamical systems~\cite{Akiyama2015,MercatAkiyama2020}.  Our main
results show that progress on this question has implications for the
version of Cobham's conjecture concerning expansions in an algebraic
base.  A natural direction for further research is to determine
classes of morphisms that lead to strongly echoing words, rather than
merely echoing words (for which we would obtain transcendence outright
by applying Theorem~\ref{thm:main2}, rather than the weaker
rational-transcendence dichotomy that is obtained from
Theorem~\ref{thm:main}).  In the present work we have obtained such a
result for the class of $k$-bonacci morphisms.  One can ask whether
the strong echoing property holds for the sub-class of Arnoux-Rauzy words
whose $S$-adic expansion is periodic (see~\cite[Chapter
7]{pytheas2002substitutions} for the notion of $S$-adic expansion).  In this regard, we note that a number
whose expansion in an integer base is an Arnoux-Rauzy word is
necessarily transcendental~\cite{BFZ,FM}.  On the other
hand,~\cite[Section 3.1]{PK} gives an example of the Episturmian word
$\boldsymbol u$ and an algebraic base $\beta$ such that the
Non-Vanishing condition fails and the number
$\llbracket\boldsymbol{u}\rrbracket_{\beta}$ belongs to $\mathbb Q(\beta)$.

\bibliography{bibliography}
\end{document}